\newcommand{\co}{\mathcal{O}}
\newcommand{\cl}{\mathcal{L}}
\newcommand{\cn}{\mathcal{N}}
\newcommand{\cc}{\mathcal{C}}
\newcommand{\ind}[2]{\mathcal{I}^{#2}_{#1}}
\newcommand{\res}[2]{\mathcal{R}^{#2}_{#1}}
\newcommand{\dres}[2]{\tilde{\mathcal{R}}^{#2}_{#1}}
\newcommand{\indPG}{\ind{P}{G}}
\newcommand{\resPG}{\res{P}{G}}
\newcommand{\dresPG}{\dres{P}{G}}
\newcommand{\dg}{\mathrm{DG}}
\newtheorem{theorem}{Theorem}[section]
\newtheorem{lemma}[theorem]{Lemma}
\newtheorem{proposition}[theorem]{Proposition}
\newtheorem{noname*}[theorem]{}
\theoremstyle{definition}
\newtheorem{definition}[theorem]{Definition}
\theoremstyle{remark}
\newtheorem{remark}[theorem]{Remark}
\numberwithin{equation}{section}
\DeclareMathOperator{\Hom}{Hom}
\DeclareMathOperator{\dgHom}{\mathcal{H}om}
\DeclareMathOperator{\Ext}{Ext}
\DeclareMathOperator{\uHom}{\underline{Hom}}
\DeclareMathOperator{\dm}{D^{\textup{b}}_{\textup{m}}}
\newcommand{\dgm}{\mathrm{D}^{\mathrm{b}}_{G, \mathrm{m}}}
\DeclareMathOperator{\db}{D^{\textup{b}}}
\newcommand{\dbeq}[2]{\mathrm{D}^{\mathrm{b}}_{#1}(#2)}
\newcommand{\dbg}{\mathrm{D}^{\mathrm{b}}_G}
\DeclareMathOperator{\vd}{\mathbb{D}}
\newcommand{\kb}{\mathrm{K}^\textup{b}}
\DeclareMathOperator{\ql}{\overline{\mathbb{Q}}_\ell}
\DeclareMathOperator{\uH}{\underline{H}}
\DeclareMathOperator{\cH}{H}
\DeclareMathOperator{\pure}{Pure}
\DeclareMathOperator{\ic}{IC}
\DeclareMathOperator{\pt}{pt}
\DeclareMathOperator{\perv}{Perv}
\DeclareMathOperator{\Lie}{Lie}
\DeclareMathOperator{\End}{End}
\newcommand{\R}{\Ql[W]\#\Sh}
\newcommand{\Rc}{\Ql [W(L)]\#\Sz}
\DeclareMathOperator{\Sh}{\mathsf{S}\mathfrak{t}^*}
\newcommand{\Sz}{\mathsf{S}\mathfrak{z}^*}
\DeclareMathOperator{\Ql}{\overline{\mathbb{Q}}_{\ell}}
\DeclareMathOperator{\dbc}{D_{c}^{b}}
\DeclareMathOperator{\constant}{\underline{\ql}}
\newcommand{\dual}[1]{\omega_{#1}}
\newcommand{\cusp}{\mathbf{c}}
\newcommand{\spec}{\mathrm{Spec}}
\begin{document}

\title[Derived Generalized Springer Correspondence]{Formality and Lusztig's Generalized Springer Correspondence}

\author{Laura Rider}
\address{Department of Mathematics\\
University of Georgia\\
Athens Georgia 30602}
\email{laurajoy@uga.edu}

\author{Amber Russell}
\address{Department of Mathematics and Actuarial Science\\
Butler University\\
Indianapolis Indiana 46208}
\email{acrusse3@butler.edu}

\subjclass[2010]{Primary 17B08, Secondary 20G05, 14F05}

\begin{abstract} We prove a derived equivalence between each block of the derived category of sheaves on the nilpotent cone and the category of differential graded modules over a degeneration of Lusztig's graded Hecke algebra. Along the way, we construct and study a mixed version of the geometric category. This work can be viewed as giving a derived version of the generalized Springer correspondence.
\end{abstract}

\maketitle

\section{Introduction and Review of the generalized Springer correspondence}  

Let $G$ be a connected, reductive algebraic group defined over an algebraically closed field of good characteristic, and let $\cn$ denote its nilpotent cone. In this note, we complete the description of $\dbg(\cn)$, the $G$-equivariant derived category of sheaves on the nilpotent cone as initiated in \cite{Ri}. Lusztig's generalized Springer correspondence relates $\perv_G(\cn)$, the category of perverse sheaves on $\cn$, to relative Weyl group representations. We extend this to a derived equivalence between $\dbg(\cn)$ and the (perfect) derived category of  differential graded (dg) modules for the associated $\Ext$-algebra. This completes the proof of a version of the Soergel--Lunts conjecture (see \cite{Lu, So}) for the nilpotent cone. 

The category $\perv_G(\cn)$ is semisimple. However, two irreducible perverse sheaves $S$ and $S'$ may have ``geometric" extensions between them, i.e. $\Hom^i(S, S')$ need not equal 0 for $i>0$. In \cite{L1}, Lusztig classified the irreducible perverse sheaves on $\cn$ in terms of cuspidal data for $G$: each is a summand of a unique {\it Lusztig sheaf} $\mathbb{A}_{\cusp}$, the parabolic induction of some simple cuspidal perverse sheaf for a Levi subgroup of $G$. In fact, $\Hom^i(S, S')\neq 0$ if and only if $S$ and $S'$ are both summands of the same Lusztig sheaf. Hence, we have an orthogonal decomposition of the $G$-equivariant derived category according to Lusztig's classification of cuspidal data $\cusp$ up to $G$-conjugacy  \begin{equation}\label{eq:orthog}\hspace{.4in}\dbg(\cn) \cong \bigoplus_{\cusp/\sim} \dbg(\cn, \mathbb{A}_{\cusp}), \text{\hspace{.4in}\cite[Theorem 3.5]{RR}},\end{equation} where $\dbg(\cn, \mathbb{A}_{\cusp})$ denotes the triangulated subcategory of $\dbg(\cn)$ generated by the summands of $\mathbb{A}_{\cusp}$. Recently, several examples of related orthogonal decompositions have appeared. Achar, Henderson, Juteau, and Riche have made progress in the modular case \cite{AHJR}, Lusztig and Yun have proved an orthogonal decomposition for $\mathbb{Z}/m$-graded Lie algebras in \cite{LY}, and Gunningham has proved an orthogonal decomposition in the setting of equivariant D-modules on the Lie algebra in \cite{G}.  

Now we consider the Springer block. The perverse sheaves in the Springer correspondence occur as summands of what is known as the Springer sheaf, denoted $\mathbb{A}$. This is the Lusztig sheaf for the cuspidal datum when the Levi is a maximal torus. Let $W$ be the Weyl group of $G$, $T$ be a maximal torus in $G$, and $\mathfrak{t} = \mathrm{Lie}(T)$. Consider the graded algebra $\mathscr{A} = \Hom^\bullet(\mathbb{A}, \mathbb{A})$ isomorphic to a smash product algebra, $\R$ (see \cite{DR2, Kw} for defintion and details). In degree 0, this reduces to the ring isomorphism $\End(\mathbb{A})\cong\Ql[W]$ which yields the Springer correspondence linking the simple summands of $\mathbb{A}$ to irreducible $W$ representations, see \cite{BM1, L1}. 

Regard $\mathscr{A}$ as a dg-algebra with trivial differential. In \cite{Ri}, the first author proves that the Springer correspondence extends to an equivalence of derived categories \[\dbg(\cn, \mathbb{A}) \cong \dg(\mathscr{A}).\] There are many obstructions to constructing a triangulated functor $\dbg(\cn, \mathbb{A}) \rightarrow \dg(\mathscr{A})$ directly. For one thing, objects in $\dbg(\cn, \mathbb{A})$ are not complexes of sheaves. To overcome this difficulty, we construct and study a graded version of $\dbg(\cn, \mathbb{A})$ built from Deligne's category of mixed $\ell$-adic sheaves. This construction requires that Frobenius acts on $\Hom^i(\mathbb{A}, \mathbb{A})$ as multiplication by $q^{i/2}$.  This allows a proof of formality for an intermediate dg-algebra which is quasi-isomorphic to $\mathscr{A}$. Another important component of the proof is that the category from which the Springer sheaf is induced, $\dbeq{T}{\pt}$, is known to be formal by \cite{BL}. We use this category as a sort of dg-enhancement to $\dbg(\cn, \mathbb{A})$. This equivalence gives a description of one block in the orthogonal decomposition \eqref{eq:orthog} of $\dbg(\cn)$. 

Our present goal is to give a similar description for the other blocks in the decomposition. To that end, fix a cuspidal datum $\cusp = (L, \co_L, \cc)$ (see Defintion \ref{def:cuspdata}) and let $W(L) = \mathrm{N}_G(L)/L$. Let $Z(L)^\circ$ be the identity component of the center of $L$ and $\mathfrak{z}=\Lie(Z(L)^\circ)$.  Then $W(L)$ is a Weyl group (\cite[Theorem 5.9]{L0}) and $W(L)$ acts on $\mathfrak{z}$ by the reflection representation. Now, we consider the dg-algebra $\mathscr{A}_\cusp = \Hom^\bullet(\mathbb{A}_\cusp, \mathbb{A}_\cusp)$. Lusztig's work implies this algebra is isomorphic to a degeneration of the graded Hecke algebra from \cite{L3}: $\mathscr{A}_\cusp \cong \Rc$ (see \cite[Theorem 3.1]{K} for a proof of this isomorphism). In degree 0, this reduces to an isomorphism $\End(\mathbb{A}_\cusp)\cong \Ql[W(L)]$ proven in \cite{L1} which yields Lusztig's generalized Springer correspondence: a bijection between the simple summands of $\mathbb{A}_\cusp$ (up to isomorphism) and irreducible $W(L)$ representations. In the present note, we prove the derived equivalence (see Theorem \ref{thm:main}): \[\dbg(\cn, \mathbb{A}_\cusp)\cong \dg(\mathscr{A}_\cusp).\] We follow the procedure as in \cite{Ri} but must address the added complication of $\mathbb{A}_\cusp$ arising from a non-constant local system. In particular, we construct in Theorem \ref{lem:catinv} a graded version of $\dbg(\cn)$.

The problem of extending Lusztig's generalized Springer correspondence is also considered by Gunningham in \cite{G}. However, \cite{G} relies on tools from the setting of infinity categories. Furthermore, Gunningham focuses on the category of D-modules as opposed to constructible complexes. This approach allows deduction of an equivalence without considering Deligne's mixed sheaves and formality.

We expect that similar techniques can be applied in the setting of $K$-orbits on the flag variety $G/B$, since in this setting, general perverse sheaves are obtained by an induction-type procedure from some \textit{cuspidal} ones which are known to be clean.\footnote{This suggestion was communicated to the first author by W. Soergel.}

\subsection*{Organization} The paper is organized as follows. In Section \ref{sec:cuspblock}, we review the setup required to study Lusztig's generalized Springer correspondence. We also prove formality directly for a cuspidal block (see Proposition \ref{prop:cuspformal}). In Section \ref{sec:mix}, we study the Frobenius action on $\Hom^i(\mathbb{A}_\cusp, \mathbb{A}_\cusp)$ (see Propositions \ref{propcohovanish} and \ref{prop:frobin}). This is the key ingredient in the construction of the \emph{mixed}/\emph{graded} version of $\dbeq{G}{\cn, \mathbb{A}_\cusp}$ (see Theorem \ref{lem:catinv}). Then we prove a mixed version of the generalized Springer correspondence, Theorem \ref{thm:mixgen}, and finally our main result, Theorem \ref{thm:main}. 

\section{Preliminaries}\label{sec:cuspblock}
\subsection{Notation}\label{sec:prelim}
We fix an algebraically closed field $k$ of positive characteristic. All varieties we consider are defined over $k$ except in Section \ref{sec:mix} when we need to employ mixed sheaves as developed in \cite{De} and \cite[Section 5]{BBD}. For an action of an algebraic group $G$ on a variety $X$, we consider the categories, denoted $\perv_G(X)\subset \dbeq{G}{X}$, of $G$-equivariant perverse sheaves on $X$ and the $G$-equivariant (bounded) derived category of sheaves on $X$. See \cite{BL} for background and definitions related to these categories. All of our sheaves will have $\ql$ coefficients. For $\mathscr{F}, \mathscr{G}\in \dbeq{G}{X}$, we let $\Hom^i(\mathscr{F}, \mathscr{G}):=\Hom_{\dbeq{G}{X}}(\mathscr{F}, \mathscr{G}[i])$. All sheaf functors are understood to be derived. We denote the constant sheaf on $X$ by $\constant_X$ or just $\constant$ when there is no ambiguity. Let $j:\co\hookrightarrow X$ be an inclusion of a locally closed smooth subset and let $\cl$ be an irreducible local system on $\co$. Then the intermediate extension of $\cl$ to $X$, denoted $\ic(\co, \cl):=j_{!*}\cl[\dim\co]$, is a simple perverse sheaf on $X$. We denote by $\vd$ the verdier duality functor preserving the category of perverse sheaves. For a local system $\cl$, we let $\cl^*$ denote the local system given by $R\dgHom(\cl, \constant)$.

Our main theorem relates a geometric derived category to an algebraic category of differential graded modules. For a differential graded (or dg from now on) algebra $\mathscr{A}$, we will consider the derived category of finitely generated dg-modules denoted by $\dg(\mathscr{A})$. See \cite{BL} for definitions.

\subsection{Induction and Restriction Functors} 
Let $G$ be a connected, reductive algebraic group defined over $k$ of good characteristic. We consider $\cn$, its nilpotent cone with the adjoint $G$-action. Let $P$ be a parabolic subgroup of $G$ with Levi decomposition $P = LU_P$. We denote by $\cn_L$ the nilpotent cone for $L$ and $\mathfrak{u}_P = \Lie(U_P)$. We consider the following $G$-varieties and $G$-maps \[\widetilde{\cn}^P:=G\times^P(\mathfrak{u}_P + \cn_L) \hspace{.2cm}\textup{ and }\hspace{.2cm} \mathcal{C}_P:=G\times^P\cn_L,\]\[\cn\xleftarrow{\mu}\widetilde{\cn}^P\stackrel{\pi}{\longrightarrow}\mathcal{C}_P.\] The variety $\widetilde{\cn}^P$ is called a partial resolution of $\cn$ and is studied in \cite{BM2}. Note that $\mu$ is proper and $\pi$ is smooth of relative dimension $d_P = \dim U_P$, so we have $\mu_! \cong \mu_*$ and $\pi^! = \pi^*[2d_P](d_P)$. We consider the following functors \begin{equation}\label{eq:functors} \begin{split} \indPG = \mu_!\pi^*[d_P](\tfrac{d_P}{2}) \cong \mu_*\pi^![-d_P](\tfrac{-d_P}{2}), \\ 
\resPG = \pi_*\mu^![-d_P](\tfrac{-d_P}{2}), \hspace{.2cm}\dresPG=\pi_!\mu^*[d_P](\tfrac{d_P}{2}), \end{split}\end{equation} which we will refer to as induction and restriction functors. We have adjoint pairs $(\indPG, \resPG)$ and $(\dresPG, \indPG)$. Lusztig proves that the functors $\indPG$, $\resPG$, and $\dresPG$ are exact with respect to the perverse $t$-structure. See \cite[Theorem 4.4]{L2} or for more general coefficient rings (Noetherian commutative ring of finite global dimension), see \cite[Proposition 4.7]{AHR}. The notation $(i)$ for a half-integer $i$ indicates a \emph{Tate twist}; see Section \ref{sec:mix}. We include the Tate twist in the definition of these functors in anticipation of when we apply them in the mixed setting in Section \ref{sec:mix}. With the above twists, the functors $\indPG, \resPG$, and $\dresPG$ preserve weights.

\begin{remark}\label{rm:ind} Let $H$ be a closed subgroup of $G$, with $d_H = \dim H$ and $d_G = \dim G$. Let $X$ be an $H$-space, and $\nu: X\hookrightarrow G\times^H X$. Then equivariant induction equivalence is given by $\nu^*\circ \mathrm{For}^G_H: \dbeq{G}{G\times^H X} \rightarrow \dbeq{H}{X}$, \cite[2.6.3]{BL}. This functor is exact with respect to the constructible t-structure, and its shift by $d_H-d_G$ preserves the perverse t-structure (\cite[5.1]{BL}). Let $\mathbb{V}$ denote the inverse. Obviously, $\nu^!\circ \mathrm{For}^G_H: \dbeq{G}{G\times^H X} \rightarrow \dbeq{H}{X}$ is also an equivalence, though now it does not preserve the constructible t-structure. The two are related by $\nu^*\cong \nu^![2(d_G - d_H)]$. Hence, the inverse to $\nu^!$ is isomorphic to $\mathbb{V}[2(d_H - d_G)]$ (or $\mathbb{V}[2(d_H- d_G)](d_H - d_G)$ in the mixed setting).\end{remark} Since $U_P$ acts trivially on $\cn_L$ and is contractible, Remark \ref{rm:ind} gives an equivalence of categories $\dbeq{G}{\mathcal{C}_P}\cong\dbeq{L}{\cn_L}$. Thus, often we will think of the induction (respectively, restriction) functor as having domain (respectively, codomain) $\dbeq{L}{\cn_L}$:
\[\indPG: \dbeq{L}{\cn_L}\rightarrow \dbeq{G}{\cn} \textup{ and }\resPG, \dresPG:  \dbeq{G}{\cn}\rightarrow \dbeq{L}{\cn_L}. \]

\subsection{Formality for a Cuspidal Block}
\begin{definition}\label{def:cuspdata} A simple perverse sheaf $\ic(\co,\cc)\in\dbeq{G}{\cn}$ is called \textit{$G$-cuspidal} if $\resPG(\ic(\co,\cc))= 0$ for all proper parabolics $P$ in $G$. A \textit{cuspidal datum} for $G$ is a tuple $\cusp = (L, \co_L, \cc)$ where $L$ is a Levi subgroup of $G$, $\co_L$ is an $L$-orbit in $\cn_L$ so that $\ic(\co_L, \cc)$ is $L$-cuspidal. We say two cuspidal data $\cusp$ and $\cusp '$ are equivalent if they are conjugate in $G$, and in this case, we write $\cusp \sim \cusp '$.\end{definition} 

Associated to each cuspidal datum $\cusp=(L, \co_L, \cc)$ is the semisimple perverse sheaf $\mathbb{A}_{\cusp} = \indPG(\ic(\co_L, \cc))[d_P](\tfrac{d_P}{2})$, which we refer to as a \textit{Lusztig sheaf}. We will often work with the triangulated subcategory of $\dbg(\cn)$ generated by the simple summands of $\mathbb{A}_\cusp$ which we denote by $\dbeq{G}{\cn, \mathbb{A}_{\cusp}}
$. We call a block $\dbg(\cn, \mathbb{A}_\cusp)$ in the orthogonal decomposition of \eqref{eq:orthog} \textit{cuspidal} if $\cusp = (G, \co_G, \cc)$ and so, $\mathbb{A}_\cusp = \ic(\co_G, \cc)$ is a simple perverse sheaf, which we denote by $\ic_\cusp$. Let $j:\co_G\hookrightarrow \cn$ be the inclusion. A cuspidal simple perverse sheaf is \textit{clean}, which means that $\ic_\cusp\cong j_!\cc[\dim\co_G] \cong j_*\cc[\dim\co_G]$. (Cleanness for cuspidals is due to Lusztig. See \cite[Proposition 4.2]{RR} for a proof.)

The following lemma can be deduced from Lusztig \cite{L3}. A proof of the statement (along with a study of the Frobenius action) can be found in \cite[Lemma 4.4]{RR}. However, the $\Ext$ computation given there assumes that the cuspidal local system is rank one, which is not always true. Cuspidal local systems are rank one in every simple type \emph{except} type D, where their rank can be a power of 2. 

\begin{lemma}\label{lem:ext} Let  $\cusp = (G, \co_G, \cc)$ and so, $\mathbb{A}_\cusp = \ic(\co_G, \cc)$. We have an isomorphism  $\mathscr{A}_\cusp \cong \Sz$, where $\mathfrak{z} = \Lie(Z(G)^\circ).$
\end{lemma}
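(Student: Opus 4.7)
The approach is to use cleanness of $\ic_\cusp$ to reduce the endomorphism computation on $\cn$ to an equivariant-cohomology computation on the orbit $\co_G$, and then to feed in Lusztig's structural fact about the centralizer of a cuspidal element.

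Since $\ic_\cusp\cong j_*\cc[\dim\co_G]$ by cleanness, and $j^*j_*\cong\mathrm{id}$ for the locally closed immersion $j\colon\co_G\hookrightarrow\cn$, the adjunction $j^*\dashv j_*$ identifies
\[
\Hom^\bullet(\ic_\cusp,\ic_\cusp)\;\cong\;\Hom^\bullet_{\dbeq{G}{\co_G}}(\cc,\cc).
\]
Fixing $x\in\co_G$ with stabilizer $C:=C_G(x)$, Remark \ref{rm:ind} furnishes an equivalence $\dbeq{G}{\co_G}\cong\dbeq{C}{\pt}$ sending $\cc$ to its stalk $V:=\cc_x$, an irreducible representation of the finite component group $A:=C/C^\circ$ regarded as a $C$-module via $C\rightarrow A$. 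Standard equivariant-cohomology computations then give
\[
\Hom^\bullet_{\dbeq{C}{\pt}}(V,V)\;\cong\;H^\bullet(BC;\End V)\;\cong\;\bigl(H^\bullet(BC^\circ)\otimes\End V\bigr)^A,
\]
the last isomorphism using $\ql$-coefficients (so higher group cohomology of $A$ vanishes) together with triviality of the local system $\End V$ after pullback to the simply connected $BC^\circ$.

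The geometric input, drawn from Lusztig's classification of cuspidal pairs, is that for a cuspidal $(\co_G,\cc)$ on $G$ the reductive quotient of $C^\circ$ is precisely $Z(G)^\circ$. Since the unipotent radical of $C^\circ$ contributes trivially to classifying-space cohomology, this yields $H^\bullet(BC^\circ)\cong H^\bullet(BZ(G)^\circ)\cong\Sz$. Centrality implies that $C$, and hence $A$, acts trivially on $\mathfrak{z}^*$, so the $A$-action survives only on the $\End V$ factor; irreducibility of $V$ combined with Schur's lemma over the algebraically closed field $\ql$ then collapses $(\End V)^A$ to scalars. Assembling the pieces gives $\mathscr{A}_\cusp\cong\Sz$.

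The principal obstacle, and the reason the present statement improves on \cite[Lemma 4.4]{RR}, is handling the $\End V$ factor when $V$ has dimension greater than one, as occurs for certain cuspidals in type $D$. Rather than identifying $\End V$ with $\ql$ outright one must invoke Schur's lemma to collapse the $A$-invariants; once this is noted, the clean-and-centralizer argument above is insensitive to $\dim V$ and works uniformly across all types.
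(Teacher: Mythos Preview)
Your proof is correct and follows essentially the same route as the paper: reduce to the orbit via cleanness and adjunction, pass to the centralizer via equivariant induction, replace $C^\circ$ by $Z(G)^\circ$ using Lusztig's unipotence result \cite[Proposition 2.8]{L1}, note that $A=C/C^\circ$ acts trivially on $\mathfrak{z}$, and collapse $(\End V)^A$ to $\ql$ by Schur's lemma. The paper's proof is the same chain of isomorphisms, phrased in terms of $\cH^\bullet_S(\pt,\mathbb{E}\otimes\mathbb{E}^*)$ rather than $H^\bullet(BC;\End V)$.
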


\begin{proof} We begin as in \cite[Lemma 4.4]{RR} to get $\Hom^\bullet (\ic_\cusp, \ic_\cusp) \cong \cH^\bullet_{G}(\co_G, \cc\otimes\cc^\vee)$. Let $\pt\in\co_G$. Then $\co_G = G\cdot \pt \cong G\times_S \pt$ where $S$ denotes the centralizer in $G$ of $\pt$, and $S^\circ$ denotes its identity component. Then equivariant induction gives $ \cH^\bullet_{G}(\co_G, \cc\otimes\cc^\vee) \cong \cH^\bullet_{S}(\pt, (\cc\otimes\cc^\vee)_{\pt})$.  The stalk $\mathbb{E}: =\cc_{\pt}$ is naturally an irreducible representation of $S/S^\circ$. Recall $Z(G)^\circ$ denotes the identity component of the center of $G$. Now we apply an argument similar to \cite[1.12 (b)]{L3}: 
\begin{eqnarray*}
\cH^\bullet_{S}(\pt, \mathbb{E}\otimes\mathbb{E}^*) \cong (\cH^\bullet_{S^\circ}(\pt, \mathbb{E}\otimes\mathbb{E}^*))^{S/S^\circ}, & \text{by \cite[1.9]{L3}},\\
										\cong(\cH^\bullet_{Z(G)^\circ}(\pt, \mathbb{E}\otimes\mathbb{E}^*))^{S/S^\circ}, & \text{$S^\circ/Z(G)^\circ$ is unipotent, \cite[Prop. 2.8]{L1}},\\
										 \cong (\cH^\bullet_{Z(G)^\circ}(\pt)\otimes\mathbb{E}\otimes\mathbb{E}^*)^{S/S^\circ}, & \text{$Z(G)^\circ$ acts trivially on $\mathbb{E}$}\\
										  \cong \cH^\bullet_{Z(G)^\circ}(\pt)\otimes(\mathbb{E}\otimes\mathbb{E}^*)^{S/S^\circ}, & \text{$S/S^\circ$ acts trivially on $\mathfrak{z}$,}\\
										  \cong \cH^\bullet_{Z(G)^\circ}(\pt), & \text{$(\mathbb{E}\otimes\mathbb{E}^*)^{S/S^\circ}\cong\ql$}.
										\end{eqnarray*}

\end{proof}

The following proposition establishes formality for a cuspidal block $\dbg(\cn, \ic_\cusp)$. Understanding the cuspidal case will be instrumental in proving formality in the other cases. 

\begin{proposition} \label{prop:cuspformal}There is an equivalence $\dbg(\cn, \ic_\cusp)\cong \dbeq{Z}{\pt}$, where $Z = Z(G)^\circ$. In particular, a cuspidal block is formal.
\end{proposition}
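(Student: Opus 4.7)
The plan is to construct an explicit equivalence through a chain of three intermediate identifications, using cleanness, equivariant induction, and a Schur-type argument encoded in the proof of Lemma~\ref{lem:ext}. Fix a base point $x \in \co_G$ with $G$-stabilizer $S = Z_G(x)$, so that $\co_G = G/S$, and let $\mathbb{E} := \cc_x$, which (by Lusztig's classification) is irreducible as a representation of $S/S^\circ$ and trivial as an $S^\circ$-representation. The chain I aim to build is
\[
\dbg(\cn,\ic_\cusp) \;\xrightarrow[\sim]{j^*}\; \langle\cc\rangle_{\dbeq{G}{\co_G}} \;\xrightarrow{\sim}\; \langle\mathbb{E}\rangle_{\dbeq{S}{\pt}} \;\xleftarrow[\sim]{\Phi}\; \dbeq{Z}{\pt},
\]
where $\langle-\rangle$ denotes the triangulated subcategory generated by the indicated object.

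For the first arrow, cleanness of $\ic_\cusp$ gives $j_!\cc \cong j_*\cc$ (up to the degree shift), and the adjunctions $(j_!, j^*)$ and $(j^*, j_*)$ combined with $j^*j_! = \mathrm{id}$ force $j^*$ to be fully faithful on the generator, hence an equivalence onto $\langle\cc\rangle_{\dbeq{G}{\co_G}}$. The second arrow is the standard equivariant induction equivalence $\dbeq{G}{G/S} \simeq \dbeq{S}{\pt}$ of Remark~\ref{rm:ind}, which carries $\cc$ to $\mathbb{E}$.

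For $\Phi$, I would use that $S^\circ/Z$ is unipotent (\cite[Prop.~2.8]{L1}) to obtain an equivalence $\dbeq{S^\circ}{\pt} \simeq \dbeq{Z}{\pt}$, and then define $\Phi(K) = K \otimes_\ql \mathbb{E}$ equipped with the $S$-equivariant structure built from the trivial $S/S^\circ$-action on $K$ (coming from $\dbeq{Z}{\pt}$) and the natural $S$-action on $\mathbb{E}$. Running the same sequence of isomorphisms as in the proof of Lemma~\ref{lem:ext} then yields, for $K, L \in \dbeq{Z}{\pt}$,
\[
\Hom^\bullet_{\dbeq{S}{\pt}}\!\bigl(\Phi(K), \Phi(L)\bigr) \;\cong\; \bigl(\Hom^\bullet_{\dbeq{S^\circ}{\pt}}(K, L) \otimes \End_\ql(\mathbb{E})\bigr)^{S/S^\circ} \;\cong\; \Hom^\bullet_{\dbeq{Z}{\pt}}(K, L),
\]
the last step using Schur's lemma $\End_\ql(\mathbb{E})^{S/S^\circ} \cong \ql$. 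Since $\Phi(\ql) = \mathbb{E}$, this makes $\Phi$ an equivalence onto $\langle\mathbb{E}\rangle$, and the asserted formality is then inherited from the known formality of $\dbeq{Z}{\pt}$ (\cite{BL}).

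The hard part will be the construction of $\Phi$: the three pieces of equivariant data---the central inclusion $Z \hookrightarrow S^\circ$, the unipotent quotient $S^\circ/Z$, and the finite extension $S/S^\circ$---must be assembled carefully to yield a well-defined $S$-equivariant structure on $K \otimes \mathbb{E}$, and the $\Hom^\bullet$ computation must be verified equivariantly rather than merely as an isomorphism of abstract graded vector spaces. By contrast, the first two arrows are formal consequences of cleanness and the standard induction equivalence.
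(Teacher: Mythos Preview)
Your proposal is correct and follows essentially the same route as the paper: the paper also builds a chain of equivalences through $\dbeq{S^\circ}{\pt}\cong\dbeq{Z}{\pt}$ (via unipotence of $S^\circ/Z$), the block $\dbeq{S}{\pt,\mathrm{triv}}$ generated by the trivial $S/S^\circ$-representation, equivariant induction to $\co_G$, tensoring with the cuspidal local system, and the clean extension $j_!$, verifying each arrow by checking that it matches generators and their $\End^\bullet$ as in Lemma~\ref{lem:ext}. The only cosmetic difference is that the paper tensors with $\cc$ on $\co_G$ after inducing, whereas you tensor with $\mathbb{E}$ on $\pt$ before inducing; under the induction equivalence these are the same operation, and your ``hard part'' $\Phi$ is exactly the composite of the paper's $\mathrm{For}^{-1}$ with $-\otimes\mathbb{E}$.
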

\begin{proof}
Let $\pt\in\co_G$. Then $\co_G = G\cdot \pt \cong G\times_S \pt$ where $S$ denotes the centralizer in $G$ of $\pt$, and $S^\circ$ denotes its identity component. First, since the nilpotent orbit $\co_G$ supports a cuspidal local system, the group $S^\circ/Z$ is unipotent by \cite[Proposition 2.8]{L1}. This implies we have an equivalence $\dbeq{S^\circ}{\pt} \cong \dbeq{Z}{\pt}$. Our proof proceeds by the following sequence of equivalences:
 
\[\dbeq{S^\circ}{\pt}\stackrel{\mathrm{For}}{\longleftarrow} \dbeq{S}{\pt, \mathrm{triv}}\stackrel{A}{\rightarrow} \dbeq{G}{\co_G, \mathrm{triv}} \stackrel{-\otimes\cc}{\longrightarrow}\dbeq{G}{\co_G, \cc}  \stackrel{j_!}{\rightarrow}\dbeq{G}{\cn, \ic_\cusp}.\]
Arrow $A$ is the perverse shift of equivariant induction equivalence restricted to the triangulated subcategory generated by the trivial representation of $S/S^\circ$. To see that the above functors indeed give equivalences, we simply note that they all take the (only) simple object to the next (hence, each equivalence is exact with respect to the perverse t-structure), and that $\End^\bullet$ of these simple objects coincides in all categories. This is outlined in the proof of computing the Frobenius action in \cite[Lemma 4.4]{RR} and Lemma \ref{lem:ext}. Exactness of $j_!$ restricted to this subcategory is implied by cleanness of $\cc$. Formality for torus-equivariant sheaves on a point follows from \cite[12.4.6]{BL}. 
\end{proof}

\section{Frobenius Actions}\label{sec:mix}
In what follows, we must employ mixed sheaves as developed in \cite[Section 5]{BBD}. Our schemes/varieties are defined over the fields $\mathbb{F}_q$ and $\bar{\mathbb{F}}_q$. We follow the convention of \cite{BBD}: the index $\circ$ denotes an object defined over $\mathbb{F}_q$, while removal of the index $\circ$ indicates the extension of scalars to $\bar{\mathbb{F}}_q$. If $X_\circ$ is a scheme over $\mathbb{F}_q$, then $X : = X_\circ \times_{\spec \mathbb{F}_q}\spec \bar{\mathbb{F}}_q$, and there is an extension of scalars functor $\eta: \dbc(X_\circ)\rightarrow\dbc(X)$. On $X_\circ$, we will only consider sheaves in the triangulated subcategory of \textit{mixed complexes} $\dm(X_\circ)\subset \dbc(X_\circ)$. See \cite[5.1.5]{BBD} for a definition and properties. When a group $G_\circ$ acts on $X_\circ$, we will consider the mixed $G_\circ$-equivariant derived category, but will suppress the $\circ$ index on the $G$ in our notation: $\dgm(X_\circ)$.

For $\mathscr{F}_\circ\in\dm(X_\circ)$, the extension to $\bar{\mathbb{F}}_q$ is endowed with an isomorphism $\mathscr{F}\stackrel{\sim}{\rightarrow}\mathrm{Fr}^*\mathscr{F}$ where $\mathrm{Fr}: X\rightarrow X$ is the Frobenius map. We denote by $\uHom^i(\mathscr{F}, \mathscr{F}')$ the vector space $\Hom^i_{\dbc(X)}(\mathscr{F}, \mathscr{F}')$ endowed with the action of Frobenius induced from the $\mathbb{F}_q$ structure of $\mathscr{F}_\circ$ and $\mathscr{F}'_\circ$. We also need notation to denote Tate twist: \label{tate}$\mathscr{F}_\circ(i)$ denotes tensor product of $\mathscr{F}_\circ$ with the $i$th power of the Tate sheaf. 

Let $G_\circ$ be a connected, reductive algebraic group split over $\mathbb{F}_q$. We also consider $\mathbb{F}_q$-versions of other varieties appearing in previous sections with the obvious notation. We assume that $\mathbb{F}_q$ is large enough (by taking a finite field extension if necessary) so that all nilpotent $G_\circ$ orbits have geometric points. 

\subsection{A choice of mixed cuspidal local systems}
Suppose $\ic(\co, \cl)$ is $G$-cuspidal. Fix a closed, Frobenius fixed point $x\in\co$. By \cite[Proposition 12.1.2]{Sp}, the group $Z_\circ = (Z_{G}(x))_\circ$, an $\mathbb{F}_q$-version of the centralizer of $x$ in $G$, is defined and $\co_\circ =G_\circ\cdot x = G_\circ \times_{Z_\circ} \{x\}$. Similarly, \cite[Proposition 12.1.1]{Sp} implies the identity component $Z_\circ^o$ is defined, so we let $\tilde{\co}_\circ := G_\circ \times_{Z^o} \{x\}$. Then there is an obvious morphism $\pi: \tilde{\co}_\circ \rightarrow \co_\circ$ which is a finite, \'etale covering, with covering group $Z_\circ/Z_\circ^o \cong \pi_{1}^{G, \textup{\'et}}(\co)$. In particular, $\pi$ is proper and small. We define the local system $\mathcal{R}_\circ:=\pi_!\constant_{\tilde{\co_\circ}}$. It is pure of weight 0 since it is the proper push-foward of $\constant_{\tilde{\co_\circ}}$, \cite[5.1.14]{BBD}. Although $\mathcal{R}_\circ$ need not be semisimple, the local system $\eta( \mathcal{R}_\circ) = \mathcal{R}$ is semisimple by \cite[Theorem 5.3.8]{BBD} and it corresponds to the regular representation of $\pi_{1}^{G, \textup{\'et}}(\co)$. Moreover, we have a short exact sequence: \[0\rightarrow \uHom(S, S')_{\mathrm{Fr}}\rightarrow \Hom_{\dgm(\co_\circ)}^1(S, S')\rightarrow \uHom^1(S, S')^\mathrm{Fr}\rightarrow 0\] for any two $S, S'\in \dgm(\co_\circ).$ For simple perverse sheaves $S$ and $S'$ with $\eta S\not\cong \eta S'$ in $\dbg(\co)$, the first term vanishes since it is a quotient of the vector space $\Hom_{\dbg(\co)}(\eta S, \eta S')$ which is trivial by Schur's lemma. Similarly, the last term vanishes since the $G$-equivariant cohomology of a nilpotent orbit is concentrated in even degrees. Hence, for each irreducible representation $\chi$ of  $\pi_{1}^{G, \textup{\'et}}(\co)$, there is a summand $\cl_\circ^\chi\otimes E_n$ with $\eta(\cl^\chi_\circ)\cong \cl^\chi$ and $E_n$ a $\ql$ vector space of dimension $\mathrm{rank}(\cl^\chi)$ with possibly non-trivial, but unipotent Galois action, so there is an injection $\cl^\chi_\circ\hookrightarrow \cl_\circ^\chi\otimes E_n \hookrightarrow \mathcal{R}_\circ$, where Frobenius acts trivially on $\cl^\chi_\circ$. Similarly, for any cuspidal datum $(L, \co_L, \cc)$, there exists a choice of local system $\cc_\circ$ which has trivial Frobenius action. We fix this choice once and for all for each cuspidal datum. This allows us to define specific $\mathbb{F}_q$-versions of $\ic_\cusp$ and $\mathbb{A}_\cusp$ for a general cuspidal datum $\cusp$. Moreover, we note that $\mathbb{A}_{\cusp \circ}$ is pure of weight 0 since $\ind{P}{G}$ preserves weights. 

\begin{lemma} \label{lem:fqclean}Any $\mathbb{F}_q$-version $\cc_\circ$ of a cuspidal local system $\cc$ is clean. 
\end{lemma}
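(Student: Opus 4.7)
The plan is to lift the known geometric cleanness of $\cc$ (recalled right after the definition of a cuspidal datum and attributed to Lusztig, cf.\ \cite[Proposition 4.2]{RR}) to an isomorphism already at the $\mathbb{F}_q$-level, by working with a canonical $\mathbb{F}_q$-defined morphism. Note that by the setup for the lemma, the supporting orbit $\co_\circ$ has a Frobenius-fixed point and is therefore defined over $\mathbb{F}_q$, so the locally closed inclusion $j\colon \co_\circ\hookrightarrow\cn_\circ$ is an $\mathbb{F}_q$-morphism and the functors $j_!$, $j_*$ respect mixed complexes.

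First I would write down the canonical adjunction morphism
\[
\alpha_\circ \colon j_!\,\cc_\circ[\dim\co] \longrightarrow j_*\,\cc_\circ[\dim\co]
\]
in $\dgm(\cn_\circ)$. Its image under the extension-of-scalars functor $\eta\colon \dgm(\cn_\circ)\to\dbg(\cn)$ is, by base-change compatibility of $j_!$ and $j_*$, the corresponding adjunction morphism $j_!\,\cc[\dim\co] \to j_*\,\cc[\dim\co]$ over $\bar{\mathbb{F}}_q$. By the geometric cleanness of $\cc$, the latter is an isomorphism.

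To conclude, I would invoke the conservativity of $\eta$. Completing $\alpha_\circ$ to a distinguished triangle in $\dgm(\cn_\circ)$ and applying the triangulated functor $\eta$ produces a cone whose underlying object over $\bar{\mathbb{F}}_q$ is zero; since the forgetful functor from mixed sheaves to $\bar{\mathbb{F}}_q$-sheaves is conservative, the cone itself vanishes, so $\alpha_\circ$ is an isomorphism. That is exactly the statement that $\cc_\circ$ is clean. I do not expect any real obstacle here; the only subtle point worth being careful about is to start with the \emph{canonical} adjunction morphism, which is automatically defined over $\mathbb{F}_q$ — a bare isomorphism produced on the geometric side is a priori only $\bar{\mathbb{F}}_q$-defined and need not descend, and in fact its existence over $\mathbb{F}_q$ is precisely what is being proved.
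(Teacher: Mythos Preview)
Your argument is correct and is considerably more direct than the paper's. You work with the canonical morphism $j_!\cc_\circ[\dim\co]\to j_*\cc_\circ[\dim\co]$, which is already defined over $\mathbb{F}_q$, and then use conservativity of $\eta$ together with the geometric cleanness result to conclude. This is a clean and efficient reduction; the only care needed, as you note, is to start from the canonical adjunction map rather than an arbitrary isomorphism.

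The paper instead argues in a more structural way: it shows that in $\dgm(\cn_\circ)$ the triangulated subcategory generated by all mixed lifts of $\ic_\cusp$ is orthogonal to every simple perverse sheaf whose geometric realization is not isomorphic to $\ic_\cusp$, using the short exact sequences \[0\to\uHom^{i-1}(S,S')_{\mathrm{Fr}}\to\Hom^{i}_{\dgm(\cn_\circ)}(S_\circ,S'_\circ)\to\uHom^{i}(S,S')^{\mathrm{Fr}}\to 0\] together with the known orthogonality over $\bar{\mathbb{F}}_q$ from \cite[Theorem~3.5]{RR}. Once that orthogonality is in place, the argument of \cite[Proposition~4.2]{RR} runs verbatim in the mixed category. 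Your approach buys brevity and avoids rerunning the cleanness argument; the paper's approach buys a little more, namely a mixed analogue of the orthogonal decomposition (as flagged in the remark following the lemma), which is of independent interest for the subsequent constructions.
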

\begin{proof} Let $\ic_\circ$ be the IC-extension of $\cc_\circ$, and let $S_\circ$ be any simple perverse sheaf in $\dbeq{G, m}{\cn_\circ}$ so that $\eta(S_\circ)\not\cong\eta(\ic_\circ).$ Denote by $\mathcal{T}$ the triangulated subcategory of $\dbeq{G, m}{\cn_\circ}$ generated by all simple perverse sheaves $S'_\circ$ with $\eta(S'_\circ)\cong\eta(\ic_\circ)$.  We have the following short exact sequences

\[0\rightarrow \uHom^{i-1}(S, S')_{\mathrm{Fr}}\rightarrow \Hom_{\dbeq{G, m}{\cn_\circ}}^{i}(S_\circ, S'_\circ)\rightarrow \uHom^{i}(S, S')^\mathrm{Fr}\rightarrow 0\] 
\[0\rightarrow \uHom^{i-1}(S', S)_{\mathrm{Fr}}\rightarrow \Hom_{\dbeq{G, m}{\cn_\circ}}^{i}(S'_\circ, S)\rightarrow \uHom^{i}(S', S)^\mathrm{Fr}\rightarrow 0.\] 

The first and third term vanish in each short exact sequence by \cite[Theorem 3.5]{RR}, so the middle term vanishes as well. Hence, $\mathcal{T}$ is orthogonal to the `rest' of $\dbeq{G, m}{\cn_\circ}$. In particular, $\mathcal{T}$ contains no simple perverse sheaves with support strictly smaller than $\ic_\circ$. Hence, the proof of \cite[Proposition 4.2]{RR} applies without change.
\end{proof} 

\begin{remark} The technique in the proof of Lemma \ref{lem:fqclean} implies an orthogonal decomposition of $\dbeq{G, m}{\cn_\circ}$ similar to \cite[Theorem 3.5]{RR} where each piece in the decomposition corresponds to a single cuspidal datum. However, in the mixed setting, it is likely that each of these pieces can be refined further according to the Galois action.\end{remark}

\subsection{Semisimplicity of Frobenius} Fix a cuspidal datum $\cusp = (L, \co_L, \cc)$ with Lusztig sheaf $\mathbb{A}_\cusp$ and parabolic subgroup $P$. To study the Frobenius action on $\uHom(\mathbb{A}_{\cusp, \circ}, \mathbb{A}_{\cusp, \circ})$, we use its relationship with the cohomology of a generalized Steinberg variety, $Z_\circ = \widetilde{\cn}_\circ^P\times_{\cn_\circ}\widetilde{\cn}_\circ^{P}$. For the rest of this section, we let $S_\circ = \mathbb{V} \ic_{\cusp\circ}$, and $\bar{S}_\circ = \mathbb{V}\vd\ic_{\cusp\circ}$. Consider the composition \begin{equation}\label{map:tau}\tau : \widetilde{\cn}_\circ^P\times_{\cn_\circ}\widetilde{\cn}_\circ^{P}\stackrel{i}{\hookrightarrow}\widetilde{\cn}_\circ^P\times\widetilde{\cn}_\circ^{P}\stackrel{\pi\times\pi}{\longrightarrow}\cc_{P,\circ}\times\cc_{P,\circ}.\end{equation} In the following proposition, we compute the dimension of  $\cH_G^0(Z_\circ, \tau^! \bar{S}_\circ\boxtimes S_\circ)$. Consider the partial flag variety $G_\circ/P_\circ$. Let $W_L$ be the Weyl group of the Levi $L$. Then the $G_\circ$-orbits in $G_\circ/P_\circ\times G_\circ/P_\circ$ are in bijection with coset representatives $W_L\backslash W/ W_L$. Since $G_\circ$ is split, this is defined over $\mathbb{F}_q$. Consider the following diagram:  \begin{center}
\begin{tikzcd}
Z_w:=\varpi^{-1}(\mathcal{O}_w)\arrow{d}{\varpi}\arrow[hookrightarrow]{r}{\iota_w} &  Z\arrow{d}{\varpi}\\
 \mathcal{O}_w\arrow[hookrightarrow]{r}{} & G/P\times G/P\arrow{d}{}\\
  & \pt
\end{tikzcd}
\end{center} According to \cite{DR1}, the irreducible components of $Z_\circ$ are exactly the closures of $Z_{w,\circ}$, and so are in bijection with $G_\circ$ orbits of $G_\circ/P_\circ\times G_\circ/P_\circ$, and hence with coset representatives $W_L\backslash W/ W_L$. We call a $G_\circ$-orbit $\co_{w,\circ}$ of $G_\circ/P_\circ\times G_\circ/P_\circ$ \textit{good} if there is $(P', P'')\in\co_{w,\circ}$ such that $P'$ and $P''$ share a Levi subgroup. Similarly, we say that a double coset $w\in W_L\backslash W/W_L$ is \textit{good} or an irreducible component $Z_{w,\circ}$ is \textit{good} if $w$ corresponds to a good $G_\circ$-orbit of $G_\circ/P_\circ\times G_\circ/P_\circ$. Lusztig proves in \cite[Section 5]{L0} that the set of good cosets in $W_L\backslash W/ W_L$ identifies with the Coxeter group $W(L)$. Note that Lusztig uses the term ``distinguished'' in \cite{L0} instead of ``good''. 
 
The proof of the following Proposition is strongly influenced by \cite[Proposition 4.7]{L3}, but Lusztig's proof doesn't apply directly in our situation. In order to get that the vector space (in his case, equivariant homology) decomposes according to the irreducible components, Lusztig uses that the algebra has cohomology only in even degrees. Of course, this is true for $\uHom^\bullet(\mathbb{A}_\cusp, \mathbb{A}_\cusp)$, but in the mixed setting $\Hom_ {\dbeq{G, m}{\cn}}^1(\mathbb{A}_{\cusp\circ}, \mathbb{A}_{\cusp\circ})\neq0.$ 

\begin{proposition}\label{propcohovanish} Let $Z_\circ$ be as above.  Then, $\cH^0(Z_{w\circ}, \iota_w^!\tau^! \bar{S}\boxtimes S) = 0$ when $w$ is bad and is isomorphic to $\Hom(\ic_{\cusp\circ}, \ic_{\cusp\circ})$ when $w$ is good. In particular, $\dim\cH^0(Z_\circ, \tau^! \bar{S}\boxtimes S) = |W(L)|$.\end{proposition}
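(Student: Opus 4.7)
My plan is to analyze each stratum $Z_{w,\circ}$ individually via the fibration $\varpi : Z_{w,\circ} \to \co_{w,\circ}$ together with equivariant induction, and then assemble the degree-zero contributions by a weight/purity argument on the stratification of $Z_\circ$. This parallels Lusztig's proof of \cite[Proposition 4.7]{L3}, but replaces his odd-degree vanishing input with purity of $\mathbb{A}_{\cusp\circ}$ (noted just before the statement).

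I begin with the stratum-wise computation. Fix a representative $(P', P'') \in \co_{w,\circ}$ and let $H_w = P' \cap P''$, so that $\co_{w,\circ} \cong G_\circ / H_{w,\circ}$ and the fiber of $\varpi$ over $(P', P'')$ is $N_w := (\mathfrak{u}_{P'} + \cn_{L'}) \cap (\mathfrak{u}_{P''} + \cn_{L''})$. The equivariant induction equivalence (Remark \ref{rm:ind}) then identifies $\cH^0(Z_{w,\circ}, \iota_w^! \tau^!(\bar S \boxtimes S))$ with an $H_w$-equivariant cohomology on $N_w$, and proper base change along the two projections $N_w \to \cn_{L'}$ and $N_w \to \cn_{L''}$ rewrites the integrand in terms of the IC extensions of the cuspidal local system $\cc$.

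For bad $w$, I argue (after conjugating so that $P'$ and $P''$ share a maximal torus) that $P' \cap L''$ is a proper parabolic subgroup of $L''$: otherwise $L'' \subseteq P'$ would force $L''$ to be a Levi of $P'$ (since $L', L''$ are $G$-conjugate to $L$ and hence equidimensional), contradicting badness. The fiber computation then factors through $\res{P' \cap L''}{L''}(\ic(\co_{L''}, \cc))$, which vanishes by cuspidality of $\cc$, so $\cH^0(Z_{w,\circ}, \iota_w^! \tau^!(\bar S \boxtimes S)) = 0$. For good $w$, after conjugation $P' = P$ and $P''$ shares the Levi $L$ of $P$; the fiber $N_w$ is then a vector bundle over $\cn_L$, and the fiber calculation reduces in degree zero to $\End_L(\ic(\co_L, \cc)) \cong \ql$, matching $\Hom(\ic_{\cusp\circ}, \ic_{\cusp\circ})$ by Schur's lemma for the simple perverse sheaf $\ic_{\cusp\circ}$.

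For the global count, I filter $Z_\circ$ by unions of stratum closures compatible with the closure order on $W_L \backslash W / W_L$ and use the resulting open/closed triangles to assemble $\cH^0(Z_\circ, \tau^!(\bar S \boxtimes S))$ from the stratum contributions. The key new ingredient is purity: $\mathbb{A}_{\cusp\circ}$ is pure of weight zero, and the functors defining $\bar S$, $S$, and $\tau^!$ preserve purity, so $\tau^!(\bar S \boxtimes S)$ is pure. Since differentials on the associated spectral sequence preserve weight, any differential that could affect a weight-zero class in cohomological degree zero would have to strictly raise weight and therefore vanishes. Combined with the stratum-wise count, this yields $\dim \cH^0(Z_\circ, \tau^!(\bar S \boxtimes S)) = |W(L)|$.

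The main obstacle is exactly the point flagged before the statement: Lusztig's argument in \cite{L3} uses that $\uHom^\bullet(\mathbb{A}_\cusp, \mathbb{A}_\cusp)$ is concentrated in even degrees to decouple the contributions from different components, whereas $\Hom^1_{\dbeq{G, m}{\cn}}(\mathbb{A}_{\cusp\circ}, \mathbb{A}_{\cusp\circ}) \neq 0$ in the mixed setting. Establishing the vanishing of the relevant spectral-sequence differentials by purity rather than by parity is the technical heart of the adaptation.
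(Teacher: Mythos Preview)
Your stratum-wise analysis is broadly along the right lines and matches Steps~1--2 of the paper, though the paper makes essential use of Braden's hyperbolic localization (not just proper base change) to pass from the fiber $\cn_P \cap \cn_{P'}$ to the smaller piece $E$; your phrases ``proper base change'' and ``vector bundle over $\cn_L$'' gloss over this, and in the bad case it is exactly hyperbolic localization that converts the computation into a restriction functor $\res{P'\cap L}{L}$ applied to a cuspidal, so that cuspidality can be invoked.

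The real gap is in your global assembly. Your claim that $\tau^!(\bar S\boxtimes S)$ is pure is not correct: $\tau = (\pi\times\pi)\circ i$ with $i$ a closed immersion, and while $(\pi\times\pi)^!$ preserves purity, $i^!$ only sends weight $\geq w$ to weight $\geq w$; it does not preserve purity. So you cannot simply say the stratification spectral sequence degenerates by weight reasons, and indeed the paper explicitly flags that the naive decoupling argument fails in the mixed setting because $\Hom^1_{\dbeq{G,m}{\cn}}(\mathbb{A}_{\cusp\circ},\mathbb{A}_{\cusp\circ})\neq 0$. Your sentence ``any differential that could affect a weight-zero class \ldots\ would have to strictly raise weight'' is also self-contradictory if differentials preserve weight.

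The paper avoids this problem by a genuinely different and more elementary device (Steps~3--4): it rewrites $\cH^0_G(Z,\tau^!(\bar S\boxtimes S))$ as the dual of $\cH^{2d_2}_c(Z^\cusp,\mathcal{G})$ for a \emph{constructible sheaf} $\mathcal{G}$ supported on a variety $Z^\cusp$ of dimension $d_2$. Top-degree compactly supported cohomology of a constructible sheaf on a $d_2$-dimensional variety automatically decomposes as a direct sum over the $d_2$-dimensional irreducible components, with no purity or parity input needed. One then checks that the components coming from good $w$ each contribute a line and those from bad $w$ contribute nothing, by Steps~1--2. This sidesteps the failed spectral-sequence argument entirely.
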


\begin{proof} Throughout, we work in the mixed derived category. For simplicity of notation, we drop the subscript $\circ$. 

First we compute the (shriek) restrictions to the irreducible components. We apply an approach similar to \cite[Proposition 3.2]{RR}, and recall the setup. Let $n\in G$ so that (1)  $L$ and $nLn^{-1}$ share a maximal torus and (2) the point $(P, nP)$ is in the $G$-orbit of $G/P\times G/P$ corresponding to $w$. Let $Z(n):=\varpi^{-1}((P, nP))\cong \cn_P\cap \cn_{P'}$.

Recall the following commutative diagram from \cite[Proposition 3.2]{RR} where $L' = nLn^{-1}, P'= nPn^{-1}, f: \cc_{P'} \stackrel{\sim}{\rightarrow}\cc_{P},$ and $E=\cn_{P'\cap L}\times_{\cn_{L\cap L'}}\cn_{P\cap L'} $.
\begin{center}
\begin{tikzpicture}[description/.style={fill=white,inner sep=1.5pt}] 
\matrix (m) [matrix of math nodes, row sep=2.5em,
column sep=2em, text height=1ex, text depth=0.25ex, nodes in empty cells]
{  Z &\cc_P\times \cc_{P} &\cc_P\times \cc_{P'}  \\ 
      &                                    & \cn_L\times\cn_{L'}\\
      Z(n) &\cn_P\cap\cn_{P'}& E\\};
\path[->,font=\scriptsize]
(m-1-1) edge node[above]{$\tau$}(m-1-2)
 (m-1-2) edge node[above]{$\sim$}(m-1-3)
(m-3-1) edge node[above]{$\sim$} (m-3-2)
(m-3-2) edge node[above]{$\alpha$} (m-3-3);

\path[right hook->,font=\scriptsize]
(m-3-1) edge node[left]{$\iota_n$} (m-1-1)
(m-2-3) edge node[right]{$\nu_1\times\nu_2$}(m-1-3)
(m-3-3) edge node[right]{$\Delta$}(m-2-3);
\end{tikzpicture}\end{center}

Let $S' = f^*S$. The map $\alpha$ is smooth of relative dimension $d_{P\cap P'}= \dim U_{P\cap P'},$ hence $\alpha^*[2d_{P\cap P'}](d_{P\cap P'})\cong\alpha^!$. Now we apply a special case of Braden's hyperbolic localization: equation (1) in \cite[Section 3]{Br}. In the diagram below, we let the multiplicative group $\mathbb{G}_m$ act on all varieties with compatible positive weights. Let $e: \{(0,0)\}\hookrightarrow E$ be the inclusion of the fixed point of this action, and let $\ell: E\rightarrow\{(0,0)\}$ be the map that sends every point to its limit.  \begin{center}
\begin{tikzpicture}[description/.style={fill=white,inner sep=1.5pt}] 
\matrix (m) [matrix of math nodes, row sep=2.5em,
column sep=2em, text height=1ex, text depth=0.25ex, nodes in empty cells]
{   \{0\} & \{(0, 0)\}   \\ 
          \cn_P\cap\cn_{P'} & E\\};
\path[->,font=\scriptsize, >=angle 90]
(m-1-1) edge[bend right=30]  node[left]{$b$} (m-2-1)
		edge node[above]{=}(m-1-2)
(m-2-1) edge node[above]{$\alpha$}(m-2-2)
(m-2-2) edge[bend right=30] node[right]{$\ell$}(m-1-2)
(m-2-1) edge[bend right = 30] node[right]{$a$}(m-1-1);

\path[right hook->,font=\scriptsize, >=angle 90]
(m-1-2) edge[bend right=30] node[left]{$e$}(m-2-2);
\end{tikzpicture}
\end{center} Then hyperbolic localization implies that we have isomorphisms $e^!\cong \ell_!$ and $a_! \cong b^!$. Furthermore, the diagram commutes. Combining these, we see that $a_!\alpha^!\cong b^!\alpha^! \cong e^! \cong \ell_!.$  By applying Verdier duality and since $\alpha$ is smooth, we also have \begin{equation}\label{eq:hyploc} a_*\alpha^!\cong \ell_*[2d_{P\cap P'}](d_{P\cap P'}).\end{equation}

Recall from \cite[Theorem 3.1]{DR1} that $Z_w \cong G\times^{P\cap P'} \cn_P\cap\cn_{P'}$.  Consider the map $\nu_3:\cn_P\cap\cn_{P'}\hookrightarrow Z_w$. Equivariant induction gives \[\cH^i_G(Z_w, \iota_w^!\tau^!(\bar{S}\boxtimes S))\cong \Hom^i_{\dbeq{P\cap P'}{Z(n)}}(\nu_3^!\constant, \nu_3^!\iota_w^!\tau^!(\bar{S}\boxtimes S)).\] Furthermore by Remark \ref{rm:ind}, we have that $\nu_3^!\constant \cong\constant[-2d_1](-d_1)$, where $d_1 = \dim G -\dim P\cap P'$. Since the diagram commutes, we have that \begin{equation}\label{eq:indtofib}\cH^i_G(Z_w, \iota_w^!\tau^!(\bar{S}\boxtimes S))\cong \cH^i_{P\cap P'}(Z(n), \alpha^!\Delta^!(\nu_1\times\nu_2)^!(\bar{S}\boxtimes S')[2d_1](d_1)).\end{equation}

Step 1: If $w$ is bad, $\cH_G^0(Z_{w\circ}, \iota_w^!\tau^! \bar{S}\boxtimes S)=0$.

Assume that $w$ is bad. Then, the parabolics $P$ and $P' = nPn^{-1}$ do not share a common Levi. Hence, $P'\cap L\times P\cap L'$ is a proper parabolic subgroup of $L\times L'$. In this case, an argument Verdier dual to \cite[Proposition 3.2]{RR} implies that $\ell_* \Delta^!(\nu\times\nu')^!\bar{S}\boxtimes S' = 0$. Hence, hyperbolic localization implies that  $a_*\alpha^! \Delta^!(\nu\times\nu')^! \bar{S}\boxtimes S' = 0.$ Finally, \eqref{eq:indtofib} yields Step 1.

Step 2: If $w$ is good, $\cH_G^0(Z_{w\circ}, \iota_w^!\tau^! \bar{S}\boxtimes S)\cong \Hom(\ic_\cusp, \ic_\cusp)$. Now we assume that $w$ is good. Then, the parabolics $P$ and $P' = nPn^{-1}$ share the Levi $L$. In this case, we see that $E = \cn_L$ and the above diagram becomes
\begin{center}
\begin{tikzpicture}[description/.style={fill=white,inner sep=1.5pt}] 
\matrix (m) [matrix of math nodes, row sep=2.5em,
column sep=2em, text height=1ex, text depth=0.25ex, nodes in empty cells]
{  Z &\cc_P\times \cc_{P} &\cc_P\times \cc_{P'}  \\ 
      &                                    & \cn_L\times\cn_{L}\\
      Z(n) &\cn_P\cap\cn_{P'}&\cn_L = E\\};
\path[->,font=\scriptsize]
(m-1-1) edge node[above]{$\tau$}(m-1-2)
 (m-1-2) edge node[above]{$\sim$}(m-1-3)
(m-3-1) edge node[above]{$\sim$} (m-3-2)
(m-3-2) edge node[above]{$\alpha$} (m-3-3);

\path[right hook->,font=\scriptsize]
(m-3-1) edge node[left]{$\iota_n$} (m-1-1)
(m-2-3) edge node[right]{$\nu_1\times\nu_2$} (m-1-3)
(m-3-3) edge node[right]{$\Delta$} (m-2-3);
\end{tikzpicture}\end{center} Recall that $a: Z(n)\rightarrow \pt$ and $\ell: \cn_L\rightarrow \pt$. We also have that $\nu_1^!(\bar{S})\cong (\vd \ic_\cusp)[-2d_P](-d_P)$ and $\nu_2^!(S')\cong \ic_\cusp[-2d_P](-d_P)$ by Remark \ref{rm:ind}. Furthermore, $-4d_P+2d_1 = -2d_{P\cap P'}$. Thus, \[a_*\alpha^!\Delta^!(\nu_1\times \nu_2)^!(\bar{S}\boxtimes S')[2d_1](d_1) \cong a_*\alpha^!\Delta^!(\vd\ic_\cusp\boxtimes \ic_\cusp)[-4d_P+2d_1](-2d_P+d_1)\] which is isomorphic to $ \ell_*(R\dgHom(\ic_\cusp, \ic_\cusp)).$ Finally, \eqref{eq:indtofib} yields Step 2.

Step 3: Replace cohomology with cohomology with compact support so that the coefficient is a constructible sheaf.

We make the following renormalizations of $S$ and $\bar{S}$ to ease calculations. Let $Q := S[d_p](\frac{d_p}{2})$. This is a (simple) perverse sheaf, pure of weight 0 with Verdier dual $\vd Q \cong \bar{S}[d_p](\frac{d_p}{2})$. We also let $\mathcal{F}$ denote $S[-\dim\co_L](\frac{-\dim\co_L}{2})$ and $\bar{\mathcal{F}}$ denote $\bar{S}[-\dim\co_L](\frac{-\dim\co_L}{2})$. These are both pure of weight 0 and in the constructible t-structure by cleanness of $\mathcal{C}$. Furthermore, note that $\mathbb{A}_\cusp = \mu_!\pi^* Q[d_p](\frac{d_p}{2})$. 

First, \cite[8.6.4]{CG}, $\cH_G^0(Z, \tau^! \bar{S}\boxtimes S)\cong \Hom_{\dbeq{G}{\cn}}(\mathbb{A}_\cusp, \mathbb{A}_\cusp)$, and this is isomorphic to degree 0 morphisms in the non-equivariant derived category  $\Hom_{\dbeq{c}{\cn}}(\mathbb{A}_\cusp, \mathbb{A}_\cusp)$, which is isomorphic to $\Hom_{\dbeq{c}{\cn}}(\mathbb{A}_\cusp[m](n), \mathbb{A}_\cusp[m](n))$ for any $m, n\in\mathbb{Z}$. Thus, $ \cH_G^0(Z, \tau^! \bar{S}\boxtimes S)$ is isomorphic to  $\Hom_{\dbeq{c}{\cn}}(\mu_!\pi^*Q, \mu_!\pi^*Q)$. Let $d_2 = 2d_P + \dim\co_L$. For clarity, we extend \eqref{map:tau} to the diagram below.

\begin{equation}\label{diag}
\begin{tikzcd}
Z\arrow{d}{\mu_{12}}\arrow[hookrightarrow]{r}{i} &  \widetilde{\cn}^P\times\widetilde{\cn}^P\arrow{d}{\mu\times\mu}\arrow{r}{\pi\times\pi} & \cc_{P}\times\cc_{P}\\
\cn\arrow{d}{a}\arrow[hookrightarrow]{r}{\Delta} & \cn\times \cn\\
  \pt &
\end{tikzcd} 
\end{equation} To finish step (1), we use the following chain of isomorphisms
 
\begin{align*} \Hom_{\dbeq{c}{\cn}}(\mu_!\pi^*Q, \mu_!\pi^*Q)&\\
& \hspace{-.5in}\cong\cH^0(\cn, \vd(\mu_!\pi^*Q)\otimes^!(\mu_!\pi^*Q)\\
& \hspace{-.5in}\cong \cH^0(\cn, \Delta^!(\mu\times\mu)_!(\pi\times\pi)^*( (\vd Q)\boxtimes Q)[2d_P](d_P))\\
& \hspace{-.5in}\cong \Hom(\Delta^*(\mu\times\mu)_!(\pi\times\pi)^* Q\boxtimes(\vd Q)[2d_P](4d_B +d_P), \dual{\cn})\\
& \hspace{-.5in}\cong \Hom(\mu_{12!}i^*(\pi\times\pi)^*(\mathcal{F}\boxtimes\bar{\mathcal{F}}[2d_2](d_2+ 2d_B)), \dual{\cn})\\
& \hspace{-.5in}\cong\Hom^{-2d_2}(a_!\mu_{12!}i^*(\pi\times\pi)^*(\mathcal{F}\boxtimes\bar{\mathcal{F}})(d_2+ 2d_B),\constant_{\mathrm{pt}})\\
& \hspace{-.5in}\cong\cH^{2d_2}_c(Z, i^*(\pi\times\pi)^*(\mathcal{F}\boxtimes\bar{\mathcal{F}})(d_2+ 2d_B))^* .
\end{align*}

Here, \cite[8.3.16]{CG} implies the first isomorphism. The second follows from definition of $\otimes^!$ along with the identification $\vd(\mu_!\pi^*Q)\cong \mu_!\pi^*(\vd Q)[2d_P](d_P)$ using properness of $\mu$ and smoothness of $\pi$. The third uses the isomorphism $\Hom(A, B)\cong \Hom(\vd B, \vd A)$. The fourth is by base change for diagram \eqref{diag} and renormalization. Lastly, the fifth uses $\dual{\cn}=a^!\constant_{\mathrm{pt}}$ and adjunction.

Note that $i^*(\pi\times\pi)^*(\mathcal{F}\boxtimes\bar{\mathcal{F}})(d_2+ 2d_B)$ is a constructible sheaf since the functor $ i^*(\pi\times\pi)^*(d_2+ 2d_B)$ is exact with respect to the constructible t-structure.  

Step 4: $\cH_G^0(Z, \tau^! \bar{S}\boxtimes S) \cong \bigoplus_{w}\cH_G^0(Z_{w\circ}, \iota_w^!\tau^! \bar{S}\boxtimes S).$ 
We'll prove the corresponding statement for (non-equivariant) cohomology with compact support: $\cH^{2d_2}_c(Z, i^*(\pi\times\pi)^*(\mathcal{F}\boxtimes\bar{\mathcal{F}})(d_2+ 2d_B))^*$. Let $Z^\cusp := (G\times^P(\overline{{\co}_L}+\mathfrak{u}_P)\times_\cn (G\times^P(\overline{\co_L}+\mathfrak{u}_P)$. This is a closed subvariety of $Z$ and \cite[Corollary 2.5]{DR1} implies it has dimension at most $d_2$. For $w= \mathrm{id}$, we have $Z_w\cap Z^\cusp \cong G\times^{P}(\overline{\co_L}+\mathfrak{u}_P)$ which has dimension equal to $d_2$, so we see that $Z^\cusp$ has dimension $d_2$. Note that the constructible sheaf $\mathcal{G} :=i^*(\pi\times\pi)^*(\mathcal{F}\boxtimes\bar{\mathcal{F}})(d_2+ 2d_B)$ has support contained within $Z^\cusp$, so using the open/closed long exact sequence, we see that $\cH^i_c(Z, \mathcal{G})\cong\cH^i_c(Z^\cusp, \mathcal{G})$ for all $i$. In particular, $\cH^{2d_2}_c(Z^\cusp, \mathcal{G})\cong \cH^{2d_2}_c(Z, \mathcal{G})$ decomposes according to the dimension $d_2$ irreducible components of $Z^\cusp$ since $Z^\cusp$ has dimension $d_2$. Now, it is clear that $Z^\cusp = \cup_{w} Z^\cusp\cap Z_w$. Furthermore, we know by Step 1, that our cohomology vanishes along $Z^\cusp\cap Z_w$ for $w$ bad. Recall the notation from Step 2. Using \cite[Lemma 2.2]{DR1}, for $w$ good, we have $Z^\cusp\cap Z_w = G\times^{P\cap P'}(\overline{\co_L}+\mathfrak{u}_P\cap\mathfrak{u}_{P'})$. This is irreducible so its closure is an irreducible component of $Z^\cusp$, and it has dimension $d_2$.  Finally, summing over all $w$ good, we see that the cohomology has dimension equal to $|W(L)|$. 
\end{proof}

\begin{remark} We note that the proof of the above theorem did not rely in any way on the choice of $\mathbb{F}_q$-version of the cuspidal local system. In particular, one should be able to use the techniques in Propositions \ref{propcohovanish} and \ref{prop:frobin} to show that the induction and restriction functors preserve semisimple Frobenius actions, and so satisify a version of the \textit{standard conjectures}. See \cite[Proposition 1.15]{Mi}, for instance.
\end{remark}

\begin{proposition}\label{prop:frobin} The Frobenius action on $\uHom^i(\mathbb{A}_\cusp, \mathbb{A}_\cusp(\frac{i}{2}))$ is trivial and $\mathbb{A}_\cusp$ is semisimple.
\end{proposition}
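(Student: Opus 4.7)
The plan is to leverage the weight computation from Proposition \ref{propcohovanish}, purity of $\mathbb{A}_{\cusp, \circ}$, and the algebraic description $\mathscr{A}_\cusp \cong \Rc$ in order to handle all degrees uniformly.

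First, since the local system $\cc_\circ$ was fixed so that Frobenius acts trivially on $\cc$, the simple perverse sheaf $\ic_{\cusp, \circ}$ is pure of weight $0$, and because $\indPG$ (with the Tate twist conventions of Section \ref{sec:prelim}) preserves weights, so is $\mathbb{A}_{\cusp, \circ}$. By the standard weight bounds of \cite[5.1.15]{BBD}, the Frobenius weights on $\uHom^i(\mathbb{A}_\cusp, \mathbb{A}_\cusp)$ are at most $i$; equivalently, all Frobenius eigenvalues on the Tate twist $\uHom^i(\mathbb{A}_\cusp, \mathbb{A}_\cusp(\tfrac{i}{2}))$ have complex absolute value at most $1$.

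Next I handle degree $0$. The short exact sequence
\[0\to \uHom^{-1}(\mathbb{A}_\cusp, \mathbb{A}_\cusp)_{\mathrm{Fr}}\to \Hom^0_{\dbeq{G, m}{\cn_\circ}}(\mathbb{A}_{\cusp, \circ}, \mathbb{A}_{\cusp, \circ})\to \uHom^0(\mathbb{A}_\cusp, \mathbb{A}_\cusp)^{\mathrm{Fr}}\to 0,\]
combined with the vanishing of negative self-Exts for perverse sheaves and the dimension count $|W(L)|$ from Proposition \ref{propcohovanish}, forces $\uHom^0(\mathbb{A}_\cusp, \mathbb{A}_\cusp)^{\mathrm{Fr}} = \End(\mathbb{A}_\cusp)$, so Frobenius acts trivially on $\End(\mathbb{A}_\cusp) = \Ql[W(L)]$. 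Triviality of the $\mathrm{Fr}$-action on endomorphisms in particular makes it semisimple, so $\mathbb{A}_{\cusp, \circ}$ decomposes as a direct sum of simple mixed perverse sheaves; this proves the semisimplicity statement.

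For the higher-degree Frobenius computation, I would use that $\mathscr{A}_\cusp \cong \Ql[W(L)] \# \Sz$ is generated as a graded algebra by its degree-$0$ and degree-$2$ parts. The degree-$0$ generators $\Ql[W(L)]$ carry trivial Frobenius action by the previous paragraph. The degree-$2$ generators $\mathfrak{z}^*$ are identified, via the proof of Lemma \ref{lem:ext} applied at the Levi $L$, with $H^2_{Z(L)^\circ}(\mathrm{pt})$; since $G_\circ$ is split, the central subtorus $Z(L)^\circ$ is also split, so Frobenius acts on $H^2_{Z(L)^\circ}(\mathrm{pt})$ by multiplication by $q$, i.e. trivially after Tate twist $(1)$. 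Since the algebra structure on $\mathscr{A}_\cusp$ is Frobenius-equivariant and the generators carry trivial Frobenius after the appropriate Tate twists, the same holds for every homogeneous piece, giving the claim.

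The main technical obstacle is verifying that the identification $\mathscr{A}_\cusp \cong \Ql[W(L)] \# \Sz$ from \cite{L3, K} is genuinely compatible with the mixed structure on both sides---or at least that one may choose the identification so that the degree-$2$ generators embed as $H^2_{Z(L)^\circ}(\mathrm{pt})(1)$. This requires tracing through the adjunction-based construction of the isomorphism and confirming that the cuspidal-Levi inclusion $\mathsf{S}\mathfrak{z}^*\hookrightarrow\mathscr{A}_\cusp$ coincides with the one coming from torus-equivariant cohomology. Once this compatibility is in hand, the rest of the argument is formal.
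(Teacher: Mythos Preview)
Your degree-$0$ argument and the semisimplicity deduction are essentially the paper's: the short exact sequence for Frobenius (co)invariants, vanishing of $\uHom^{-1}$ for perverse sheaves, and the dimension match $|W(L)|$ on both sides (Proposition~\ref{propcohovanish} for the mixed $\Hom$, \cite{L1} for $\End(\mathbb{A}_\cusp)$).

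For $i>0$ your route diverges from the paper's. You pass through the algebra isomorphism $\mathscr{A}_\cusp\cong\Rc$ and generation in degrees $0$ and $2$, then want to pin the Frobenius action on the degree-$2$ generators $\mathfrak{z}^*$ via Lemma~\ref{lem:ext}. As you correctly flag, this requires knowing that the subalgebra inclusion $\Sz\hookrightarrow\mathscr{A}_\cusp$ from \cite{L3,K} coincides (Frobenius-equivariantly) with the map $\ind{P}{G}:\End^\bullet_{\dbl(\cn_L)}(\ic_\cusp)\to\End^\bullet_{\dbg(\cn)}(\mathbb{A}_\cusp)$; this is believable but not free, and it imports the construction in \cite{L3,K} into a statement that should be internal to the sheaf theory. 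The paper sidesteps this: from the degree-$0$ result and adjunction one gets $\dim\Hom_{\dbeq{L,\mathrm{m}}{\cn_{L,\circ}}}(\ic_{\cusp,\circ},\resPG\indPG\ic_{\cusp,\circ})=|W(L)|$, which forces an isomorphism $\resPG\indPG\ic_{\cusp,\circ}\cong\bigoplus_{W(L)}\ic_{\cusp,\circ}$ of \emph{mixed} perverse sheaves (no Tate twists appear). Adjunction then yields, Frobenius-equivariantly,
\[
\uHom^i(\mathbb{A}_\cusp,\mathbb{A}_\cusp)\;\cong\;\bigoplus_{W(L)}\uHom^i(\ic_\cusp,\ic_\cusp)\;\cong\;\bigoplus_{W(L)}\uH^i_{Z(L)^\circ}(\pt),
\]
and the known Frobenius action on torus-equivariant cohomology finishes. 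Your argument would go through once the compatibility is verified, but the paper's adjunction argument is shorter, self-contained, and does not rely on the specific presentation of $\Rc$.
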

\begin{proof} First assume $i=0$. The argument in \cite[8.6.4]{CG} implies we have an isomorphism $\Hom_{\dm(\cn_\circ)}(\mathbb{A}_{\cusp\circ}, \mathbb{A}_{\cusp\circ})\cong \cH^0(Z_\circ, \tau^! \bar{S}\boxtimes S)$. By \cite[5.1.2.5]{BBD}, we have a short exact sequence relating the Frobenius coinvariants and invariants to morphisms in $\dm(\cn_\circ)$ \[0\rightarrow \uHom^{-1}(\mathbb{A}_{\cusp}, \mathbb{A}_{\cusp})_{\mathrm{Fr}}\rightarrow\Hom_{\dm(\cn_\circ)}(\mathbb{A}_{\cusp\circ}, \mathbb{A}_{\cusp\circ})\rightarrow \uHom(\mathbb{A}_{\cusp}, \mathbb{A}_{\cusp})^{\mathrm{Fr}} \rightarrow 0.\] Since $\mathbb{A}_{\cusp}$ is perverse, we see that $\uHom^{-1}(\mathbb{A}_{\cusp}, \mathbb{A}_{\cusp}) = 0$. Hence, there is an isomorphism $\Hom_{\dm(\cn_\circ)}(\mathbb{A}_{\cusp\circ}, \mathbb{A}_{\cusp\circ})\cong \uHom(\mathbb{A}_{\cusp}, \mathbb{A}_{\cusp})^{\mathrm{Fr}}$. Furthermore, we have an injection $\uHom(\mathbb{A}_{\cusp}, \mathbb{A}_{\cusp})^{\mathrm{Fr}}\hookrightarrow\uHom(\mathbb{A}_{\cusp}, \mathbb{A}_{\cusp}).$ Finally, this injection must be equality since the dimensions of $\Hom_{\dm(\cn_\circ)}(\mathbb{A}_{\cusp\circ}, \mathbb{A}_{\cusp\circ})$ and $\uHom(\mathbb{A}_{\cusp}, \mathbb{A}_{\cusp})$ are both $|W(L)|$ by Proposition \ref{propcohovanish} and \cite{L1}.

To see $\mathbb{A}_\cusp$ is semisimple, we simply note that the argument after the proof of \cite[Lemma 5.3]{Ri} applies replacing the role of $W$ with $W(L)$.

Finally, recall that we have an adjoint pair of functors $(\ind{P}{G}, \res{P}{G})$. Hence, the above argument implies $\Hom_{\dbeq{L}{\cn_{L\circ}}}(\ic_\circ, \res{P}{G}\ind{P}{G} \ic_\circ)$ also has dimension $|W(L)|$. Hence $\res{P}{G}\ind{P}{G} \ic_\circ$ is isomorphic to a direct sum of $|W(L)|$-many copies of $\ic_\circ$. Adjunction gives the first isomorphism and Lemma \ref{lem:ext} gives the second isomorphism: $\uHom^i(\mathbb{A}_{\cusp\circ}, \mathbb{A}_{\cusp\circ}) \cong \bigoplus_{W(L)}\uHom^i(\ic_{\cusp\circ}, \ic_{\cusp\circ}) \cong  \bigoplus_{W(L)} \uH^i_{Z(L)^\circ}(\mathrm{pt})$. Finally, it is well-known that Frobenius acts on $\uH^i_{Z(L)^\circ}(\mathrm{pt})$ as multiplication by $q^{i/2}$, and so Frobenius acts on  $\uHom^i(\mathbb{A}_{\cusp\circ}, \mathbb{A}_{\cusp\circ}(\frac{i}{2}))$ trivially.
\end{proof}

\section{Mixed and Derived Versions of the Generalized Springer Correspondence} Fix a cuspidal datum $\cusp = (L, \co_L, \cc)$. Our goal is to define and study a mixed version of the triangulated category $\dbeq{G}{\cn, \mathbb{A}_\cusp}$. Instead of recording full proofs here, we apply the construction and proofs in \cite[Sections 5, 6, Appendix]{Ri} to our setting with the following replacements: \begin{itemize}
\item the Springer sheaf $\mathbb{A}$ with the Lusztig sheaf $\mathbb{A}_\cusp$
\item the Weyl group $W$ with the relative Weyl group $W(L)$
\item the $G$-equivariant cohomology of the flag variety $\cH_G(G/B)\cong\Sh$ (denoted $\mathrm{S}\mathfrak{h}^*$ in \cite{Ri}) with $\cH_L(\co_L)\cong\Sz$, the $L$-equivariant cohomology of the nilpotent orbit $\co_L$  that supports the cuspidal local system $\cc$
\item the category $\dbeq{G}{G/B}\cong \dbeq{T}{\pt}$ with $\dbeq{L}{\cn_L, \ic_\cusp}\cong \dbeq{Z}{\pt}$ (see Proposition \ref{prop:cuspformal})
\item the graded ring $\R$ (denoted $\mathcal{A}_G$ in R) with $\Rc$
\item Induction and Restriction functors
\end{itemize}

As in \cite[Section 5]{Ri}, we define $\pure(\cn_\circ, \mathbb{A}_\cusp)$ as the full subcategory of $\dgm(\cn_\circ)$ where objects are finite direct sums of summands of the pure sheaves $\mathbb{A}_{\cusp\circ}[2n](n)$, $n\in\mathbb{Z}$. We denote by $\kb\pure(\cn_\circ, \mathbb{A}_\cusp)$ the corresponding homotopy category of $\pure(\cn_\circ, \mathbb{A}_\cusp)$. This triangulated category has a natural perverse t-structure and a second t-structure which we refer to as the Koszul dual t-structure. See \cite[Section 4]{Ri} and \cite[Proposition 5.4]{AR}. The idea that $\kb\pure(X_0)$ (appropriately defined) should be a good substitute for a mixed, derived version of $\dbeq{c}{X}$ is due to \cite{AR}.
\begin{theorem}\label{lem:catinv} \begin{enumerate}\item The category $\pure(\cn_\circ, \mathbb{A}_\cusp)$ is Frobenius invariant in the sense of \cite[Definition 2.3]{Ri}. \item There is a realization functor $\beta: \kb\pure(\cn_\circ, \mathbb{A}_\cusp)\rightarrow\dgm(\cn_\circ, \mathbb{A}_{\cusp\circ})$ which is perverse exact and restricts to inclusion on $\pure(\cn_\circ, \mathbb{A}_\cusp)$.\item  The triangulated category $\kb\pure(\cn_\circ, \mathbb{A}_\cusp)$ is a mixed version of $\dbeq{G}{\cn, \mathbb{A}_\cusp}$.\end{enumerate}
\end{theorem}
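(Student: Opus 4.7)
The strategy is to adapt the construction from \cite[Sections 5, 6, Appendix]{Ri} with the substitutions listed above, carrying out the three parts in order.

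For part (1), I would first verify that every simple summand of $\mathbb{A}_{\cusp\circ}[2n](n)$ is pure of weight $0$, which follows because $\ind{P}{G}$ preserves weights and $\ic(\co_L, \cc_\circ)$ was chosen (in the previous section) to be pure of weight $0$ with trivial Frobenius on the defining local system. The decisive input is Proposition \ref{prop:frobin}, which says that Frobenius acts trivially on $\uHom^i(\mathbb{A}_\cusp, \mathbb{A}_\cusp(\tfrac{i}{2}))$, i.e.\ as multiplication by $q^{i/2}$ on $\uHom^i(\mathbb{A}_\cusp, \mathbb{A}_\cusp)$. Combined with semisimplicity of $\mathbb{A}_{\cusp\circ}$ (also part of Proposition \ref{prop:frobin}), this lets me check the two defining conditions of \cite[Definition 2.3]{Ri} exactly as in the proof of \cite[Lemma 5.3 ff.]{Ri}, just replacing the role of $\mathbb{A}$ by $\mathbb{A}_\cusp$ and of $W$ by $W(L)$.

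For part (2), I would invoke the general construction of a realization functor from the bounded homotopy category of a Frobenius-invariant additive subcategory into the ambient mixed derived category. This is the standard $\kb\pure$-to-$\dgm$ realization used in \cite{AR} and \cite[Section 6]{Ri}; its construction is formal once one has the vanishing $\Hom^{<0}$ between pure perverse generators controlled by weights, which we established in part (1). Perverse exactness of $\beta$ follows because each generator $\mathbb{A}_{\cusp\circ}[2n](n)$, up to a cohomological shift absorbed by the Koszul dual $t$-structure, is perverse; and $\beta$ restricts to inclusion on $\pure(\cn_\circ, \mathbb{A}_\cusp)$ by construction.

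For part (3), to say $\kb\pure(\cn_\circ, \mathbb{A}_\cusp)$ is a mixed version of $\dbeq{G}{\cn, \mathbb{A}_\cusp}$, I need to produce (i) a functor $\upsilon \colon \kb\pure(\cn_\circ, \mathbb{A}_\cusp)\to \dbeq{G}{\cn, \mathbb{A}_\cusp}$, obtained by composing $\beta$ with extension of scalars $\eta$, and (ii) check essential surjectivity and the Hom-isomorphism after passing to Frobenius coinvariants/invariants, in the sense of \cite[Definition 2.3]{Ri}. Essential surjectivity is immediate because $\eta(\mathbb{A}_{\cusp\circ})\cong \mathbb{A}_\cusp$ and the summands of $\mathbb{A}_\cusp$ generate the block by definition. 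The Hom-comparison follows by combining the $\uHom$-to-$\Hom_{\dgm}$ short exact sequence from \cite[5.1.2.5]{BBD} (used repeatedly in Section \ref{sec:mix}) with the Frobenius calculation of part (1) together with Lemma \ref{lem:ext}.

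The main obstacle is part (1): verifying the precise form of Frobenius invariance. Proposition \ref{prop:frobin} is the decisive technical ingredient, since it takes the place that the corresponding Springer-sheaf calculation played in \cite{Ri}; without the explicit identification of the Frobenius eigenvalues with the expected powers of $q^{1/2}$, the entire $\kb\pure$ formalism would fail. Once part (1) is in hand, parts (2) and (3) reduce to formal bookkeeping on top of the general framework of \cite{AR} and \cite{Ri}, via the substitution table above.
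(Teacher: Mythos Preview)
Your proposal is correct and follows essentially the same approach as the paper: Proposition~\ref{prop:frobin} feeds into the Frobenius-invariance verification (the paper cites \cite[Lemma 5.4]{Ri} rather than 5.3), the realization functor comes from the general machinery of \cite{Ri} (the paper pins this to \cite[Proposition 3.7]{Ri} rather than Section 6), and part (3) is deduced from \cite[Theorem 4.3]{Ri}. Your treatment of part (3) is more explicit than the paper's one-line citation, but the content is the same.
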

\begin{proof} Part (1) is implied by Theorem \ref{prop:frobin} and an argument similar to \cite[Lemma 5.4]{Ri} which computes the Frobenius action on $\Hom$ between two indecomposable objects in $\pure(\cn_\circ, \mathbb{A}_{\cusp\circ})$. The realization functor is constructed in \cite[Proposition 3.7]{Ri} assuming Frobenius invariance. Finally, \cite[Theorem 4.3]{Ri} yields part (3). 
\end{proof}

Let $\db(\Rc)$ denote the bounded derived category of finitely generated graded $\Rc$ modules. The following mixed version of Lusztig's generalized Springer correspondence holds.
\begin{theorem} \label{thm:mixgen}For each cuspidal datum $\cusp$, we have an equivalence of triangulated categories $\kb\pure(\cn_\circ, \mathbb{A}_\cusp)\cong\db(\Rc).$
\end{theorem}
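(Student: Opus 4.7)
The plan is to adapt the proof of the analogous Springer-block statement in \cite[Section 5]{Ri} using the substitutions listed just before Theorem \ref{lem:catinv}. The essential inputs are already in hand: Proposition \ref{prop:frobin} identifies $\uHom^\bullet(\mathbb{A}_{\cusp\circ},\mathbb{A}_{\cusp\circ})$ with $\Rc$ as a mixed graded algebra (the underlying graded-algebra structure being \cite[Theorem 3.1]{K}), and Theorem \ref{lem:catinv}(1) establishes Frobenius invariance of the category $\pure(\cn_\circ,\mathbb{A}_\cusp)$.

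On the additive level, every object of $\pure(\cn_\circ,\mathbb{A}_\cusp)$ is by definition a finite direct sum of Tate twists of indecomposable summands of $\mathbb{A}_{\cusp\circ}$; by Lusztig's generalized Springer correspondence these indecomposable summands are indexed by the irreducible characters $\chi$ of $W(L)$. Proposition \ref{prop:frobin} identifies the morphism spaces between such objects with the corresponding graded $\Rc$-module morphisms. I would use this to construct a fully faithful additive functor $\Phi\colon \pure(\cn_\circ,\mathbb{A}_\cusp)\to P$, where $P$ denotes the category of finitely generated graded projective $\Rc$-modules, sending the twist $\mathbb{A}_{\cusp\circ}(n)$ to the grading shift $\Rc\langle n\rangle$ and sending the $\chi$-isotypic summand of $\mathbb{A}_{\cusp\circ}$ to the indecomposable projective $\Rc\otimes_{\ql[W(L)]}V_\chi$.

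Next I would pass to bounded homotopy categories and extend $\Phi$ to a triangulated functor $F\colon \kb\pure(\cn_\circ,\mathbb{A}_\cusp)\to\kb(P)$. Since $\Rc$ is a smash product of a polynomial algebra with a finite group algebra, it has finite graded global dimension, so the natural inclusion $\kb(P)\hookrightarrow\db(\Rc)$ is an equivalence. Full faithfulness of $F$ follows from the additive-level computation via the standard spectral-sequence argument applied to the bicomplex of morphisms between complexes; essential surjectivity follows because every finitely generated graded $\Rc$-module admits a finite projective resolution whose terms decompose into indecomposable summands lying in the essential image of $\Phi$.

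The main obstacle, relative to the Springer case, is the presence of the nontrivial cuspidal local system $\cc$ on the nontrivial orbit $\co_L$, which a priori complicates the mixed Hom computations underpinning Frobenius invariance. This is precisely what the analysis of Section \ref{sec:mix} is designed to overcome: Propositions \ref{propcohovanish} and \ref{prop:frobin} together yield the crucial dimension count $\dim\Hom_{\dgm(\cn_\circ)}(\mathbb{A}_{\cusp\circ},\mathbb{A}_{\cusp\circ})=|W(L)|$ matching $\dim\End(\mathbb{A}_\cusp)$, and show that after the appropriate Tate twists Frobenius acts trivially on each $\uHom^i$. Once these ingredients are in place, the Koszul-style realization argument of \cite[Section 5]{Ri} adapts essentially verbatim, with $W$ and $\R$ replaced throughout by $W(L)$ and $\Rc$.
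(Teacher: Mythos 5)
Your proposal is correct and follows essentially the same route as the paper: the paper also defines the additive functor $\bigoplus_i \Hom(\mathbb{A}_{\cusp\circ}[2i](i), -)$, checks it sends the indecomposable summands $\ic_{\chi\circ}[2i](i)$ to the shifted indecomposable projectives $\{i\}V_\chi\otimes\Sz$ and is fully faithful on $\pure(\cn_\circ,\mathbb{A}_\cusp)$, and then passes to homotopy categories exactly as in \cite[Theorem 6.3, Proposition 6.5]{Ri}. (Only a wording slip: it is the indecomposable summand $\ic_\chi$, not the full $\chi$-isotypic component, that goes to the indecomposable projective.)
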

\begin{proof} The proof follows by the same reasoning as \cite[Theorem 6.3, Proposition 6.5]{Ri}. The idea is to show the functor $\phi = \bigoplus_{i} \Hom(\mathbb{A}_{\cusp\circ}[2i](i), -)$ takes each $\ic_{\chi\circ}[2i](i)$ to the indecomposable projective module $\{i\}V_\chi\otimes \Sz$, where $\ic_\chi$ corresponds to the irreducible $W(L)$-representation $V_\chi$ under the generalized Springer correpondence and $\{i\}$ denotes shift by $i$. It's easy to see that $\phi$ induces an isomorphism on morphisms between objects in $\pure(\cn_\circ, \mathbb{A}_{\cusp\circ})$ and projective modules for the algebra $\Rc$.  
\end{proof}

Let $\dg(\Rc)$ denote the derived category of finitely generated dg-modules over $\Rc$, regarded as a dg-algebra with trivial differential.
Finally, we apply Section 7 and the appendix from \cite{Ri} to arrive at our main result.
\begin{theorem} \label{thm:main}The category $\dbeq{G}{\cn, \mathbb{A}_\cusp}$ is equivalent as a triangulated category to $\dg(\Rc)$.
\end{theorem}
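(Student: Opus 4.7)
The plan is to follow the strategy of \cite[Section 7 and Appendix]{Ri} using the dictionary of replacements listed in the paragraph preceding Theorem \ref{lem:catinv}. The proof splits into two main steps: (a) produce a dg-algebra $\mathscr{B}$ such that $\dbeq{G}{\cn, \mathbb{A}_\cusp} \cong \dg(\mathscr{B})$, and (b) show that $\mathscr{B}$ is formal with cohomology isomorphic to $\mathscr{A}_\cusp \cong \Rc$.

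For step (a), I would use $\mathbb{A}_\cusp$ as a compact generator of $\dbeq{G}{\cn, \mathbb{A}_\cusp}$. By the induction equivalence, $\mathbb{A}_\cusp$ is (up to shift and twist) the image under $\indPG$ of $\ic_\cusp \in \dbeq{L}{\cn_L, \ic_\cusp}$, and the latter category is equivalent to $\dbeq{Z}{\pt}$ by Proposition \ref{prop:cuspformal}. The category $\dbeq{Z}{\pt}$ carries a canonical dg-enhancement which is formal by \cite[12.4.6]{BL}. Lifting $\indPG$ to the dg-level and taking the derived endomorphism dg-algebra of the resulting lift of $\mathbb{A}_\cusp$ produces a dg-algebra $\mathscr{B}$ whose cohomology is $\mathscr{A}_\cusp$; Keller's generation theorem then yields the desired equivalence $\dbeq{G}{\cn, \mathbb{A}_\cusp} \cong \dg(\mathscr{B})$.

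For step (b), formality is the heart of the argument, and here the Frobenius weight computations of Section \ref{sec:mix} enter decisively. By Proposition \ref{prop:frobin}, Frobenius acts trivially on $\uHom^i(\mathbb{A}_\cusp, \mathbb{A}_\cusp(\tfrac{i}{2}))$, which is to say the weight on $\uHom^i$ equals $i$. Together with the realization functor $\beta$ of Theorem \ref{lem:catinv}(2) and the mixed equivalence of Theorem \ref{thm:mixgen}, this purity allows the weight-degeneration argument from \cite[Appendix]{Ri} to run without modification after substituting $W(L)$, $\mathfrak{z}$, $\Rc$, and $\dbeq{Z}{\pt}$ for $W$, $\mathfrak{t}$, $\R$, and $\dbeq{T}{\pt}$ respectively. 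Combining (a) and (b) then gives $\dbeq{G}{\cn, \mathbb{A}_\cusp} \cong \dg(\mathscr{B}) \cong \dg(\Rc)$.

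The main obstacle, as in \cite{Ri}, is the formality step (b): one must be sure that the weight-degeneration argument transfers cleanly even though $\cc$ may be a non-constant cuspidal local system of rank greater than one. This is precisely what necessitated the careful choice of $\mathbb{F}_q$-version of $\cc$ with trivial Frobenius action in Section \ref{sec:mix}, along with Lemma \ref{lem:fqclean}, Proposition \ref{propcohovanish}, and Proposition \ref{prop:frobin}, which together supply the three inputs needed: purity of $\mathbb{A}_{\cusp\circ}$ of weight zero, semisimplicity of the Frobenius action, and identification of the weight filtration with the cohomological grading on $\mathscr{A}_\cusp$. With these ingredients in hand, the argument of \cite[Section 7]{Ri} transfers and the theorem follows.
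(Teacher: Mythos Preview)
Your proposal correctly identifies the overall strategy --- transport the argument of \cite[Section~7 and Appendix]{Ri} via the dictionary of replacements --- and you cite the right ingredients (Proposition~\ref{prop:cuspformal}, Proposition~\ref{prop:frobin}, Theorem~\ref{lem:catinv}, Theorem~\ref{thm:mixgen}). But the division into steps (a) and (b) inverts the paper's logic and hides a genuine gap.

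The gap is in your step~(a). You invoke Keller's generation theorem to obtain $\dbeq{G}{\cn,\mathbb{A}_\cusp}\cong\dg(\mathscr{B})$, but that theorem requires a dg-enhancement of the target category, and the Bernstein--Lunts equivariant derived category $\dbeq{G}{\cn}$ has no a~priori dg-model: its objects are not complexes of sheaves. This is exactly the obstruction flagged in the paper's introduction, and it is why the paper does \emph{not} appeal to Keller. Instead, the paper first builds the dg-algebra $\mathcal{R}$ from a projective resolution $\tilde{P}^\bullet$ of $\mathbb{A}_{\cusp\circ}$ in the Koszul-dual heart of $\kb\pure(\cn_\circ,\mathbb{A}_\cusp)$, then defines the functor $\Phi(M)^i=\prod_j\Hom(P^{-i+j},M[j])$ by hand, and finally proves $\Phi$ is triangulated. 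That last step is the technical heart and is what \cite[Appendix]{Ri} is devoted to; it uses the adjunction $(\dresPG,\indPG)$ together with the prior formality of the source block $\dbeq{L}{\cn_L,\ic_\cusp}\cong\dbeq{Z}{\pt}$ from Proposition~\ref{prop:cuspformal}. So Proposition~\ref{prop:cuspformal} does not supply a dg-enhancement from which one ``lifts $\indPG$''; rather, it furnishes the base case needed to verify that $\Phi$ preserves distinguished triangles.

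Correspondingly, you have the roles reversed: formality of $\mathcal{R}$ (your step~(b)) is the \emph{easier} part --- the extra Tate-twist grading coming from Theorem~\ref{lem:catinv} forces it, as in \cite[Theorem~7.4]{Ri} --- while the ``weight-degeneration argument from \cite[Appendix]{Ri}'' you cite is not about formality at all but about triangulatedness of $\Phi$. Once $\Phi$ is known to be triangulated, Beilinson's Lemma finishes the equivalence by matching morphisms on generators.
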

\begin{proof} We do not give full details and instead refer the reader to \cite[Section 7, Appendix]{Ri} with the above replacements. The sketch is as follows: first take a projective resolution $\tilde{P}^\bullet$ of $\mathbb{A}_{\cusp\circ}$ in the heart of the Koszul dual t-structure. Let $P^\bullet$ denote its image under $\eta\circ\beta:\kb\pure(\cn_\circ, \mathbb{A}_{\cusp\circ})\rightarrow\dbg(\cn).$ Let $\mathcal{R}^n : = \prod_{n = i+j, k\in\mathbb{Z}}\Hom_{\dbg(\cn)}(P^{-i+k}, P^j[k])$. Then $\mathcal{R} = \bigoplus_{n\in\mathbb{Z}}\mathcal{R}^n$ is a differential graded algebra with differential given by $d_{\mathcal{R}}(f) = d_P f - (-1)^n f d_P$ for $f$ homogeneous of degree $n$. Because of Theorem \ref{lem:catinv}, $\mathcal{R}$ also has a second grading, and moreover, $\mathcal{R}$ is formal. See \cite[Theorem 7.4]{Ri} for details. Hence, \cite{BL} implies that $\dg(\mathcal{R})\cong\dg(\Rc)$. 

Now we must construct a functor from $\dbeq{G}{\cn, \mathbb{A}_\cusp}$ to $\dg(\mathcal{R})$. Let $M$ be an object in $\dbeq{G}{\cn, \mathbb{A}_\cusp}$. Define the $\mathcal{R}$ dg-module $\Phi(M)$ in degree $i$ as $\Phi(M)^i = \prod_{j\in\mathbb{Z}}\Hom_{\dbg(\cn)}(P^{-i+j}, M[j])$ with differential $d_{\Phi(M)} f = (-1)^{i+1}d_P f$ for $f$ homogeneous of degree $i$. Because our functor is defined in terms of a complex $P^\bullet$ of objects in $\dbeq{G}{\cn, \mathbb{A}_\cusp}$, it isn't obvious the functor is triangulated. It's clear that it is additive. The argument in \cite[Lemma 7.5]{Ri} implies $\Phi$ commutes with shift. We apply \cite[Appendix]{Ri} to show $\Phi$ takes distinguished triangles to distinguished triangles. Key roles are played by the adjoint pair of induction and restriction functors and the category $\dbeq{G}{G/B}\cong \dbeq{T}{\pt}$ from which the Springer sheaf $\mathbb{A}$ is induced. The proof that the functor is triangulated requires the prior knowledge that $\dbeq{T}{\pt}$ is formal and equivalent to $\dg(\Sh)$ due to \cite{BL}. In the general setting, the Lusztig sheaf $\mathbb{A}_\cusp = \ind{P}{G}\ic_\cusp$ is induced from the category $\dbeq{L}{\cn_L, \ic_\cusp}$. By Proposition \ref{prop:cuspformal}, we have $\dbeq{L}{\cn_L, \ic_\cusp}\cong \dbeq{Z}{\pt}\cong\dg(\Sz).$ Hence, the proof in the appendix applies in our setting as well. 

Finally, since we know the functor is triangulated, Beilinson's Lemma implies that it suffices to check morphism matching on a collection of objects that generate the categories. This is outlined in \cite[Theorem 7.9]{Ri}.
\end{proof}

\section*{Acknowledgements} We are very grateful to George Lusztig for answering questions about his work; in particular, for pointing us to \cite[Proposition 4.7]{L3}. We'd also like to thank Matt Douglass and Pramod Achar for helpful discussions.


\begin{thebibliography}{JMWv1}

\bibitem[AHJR]{AHJR}
P.~Achar, A.~Henderson, D.~Juteau, and S.~Riche, {\em{Constructible sheaves on nilpotent cones in rather good characteristic}},
Selecta Math. {\bf 23} (2017), 203--243.

\bibitem[AHR]{AHR}
P.~N. Achar, A.~Henderson, and S.~Riche, {\em {Geometric Satake, Springer correspondence, and small representations II}},
Represent. Theory {\bf 19} (2015), 94--166.

\bibitem[AR]{AR}
P.~Achar and S.~Riche, {\em {Koszul duality and semisimplicity of Frobenius}}, Ann. Inst. Fourier {\bf{63}} (2013), 1511--1612.

\bibitem[BBD]{BBD}
A.~Beilinson, J.~Bernstein, and P.~Deligne, {\em {Faisceaux pervers}}, Analysis
  and topology on singular spaces, I (Luminy, 1981), Ast\'erisque, vol. 100,
  Soc. Math. France, 1982, pp.~5--171.

\bibitem[BL]{BL}
J.~Bernstein and V.~Lunts, {\em {Equivariant sheaves and functors}}, Lecture
  notes in Mathematics, Springer-Verlag, Berlin, 1994.

\bibitem[BM1]{BM1}
W.~Borho and R.~MacPherson, {\em {Repr\'esentations des groupes de Weyl et
  homologie d'intersection pour les vari\'et\'es nilpotentes }}, C. R. Acad.
  Sci. S\'er. I Math. {\bf 292} (1981), no.~15, 707--710.
  
\bibitem[BM2]{BM2}
W.~Borho and R.~MacPherson, {\em {Partial resolutions of nilpotent varieties}}, Analysis and topology
on singular spaces, II, III (Luminy, 1981), Ast«erisque {\bf101-102} (1983), 23Ð74.

\bibitem[Br]{Br}
T.~Braden, {\em {Hyperbolic localization of intersection cohomology}}, Transform. Groups {\bf 8}
(2003), 209--216.

\bibitem[CG]{CG}
N.~Chriss and V.~Ginzburg, {\em {Representation theory and complex geometry}},
  Birkh\"auser Boston, Inc., 1997.

\bibitem[De]{De}
P.~Deligne, {\em {La conjecture de Weil II}}, Publ. Math. IHES. {\bf 52} (1980), 137--252.

\bibitem[DR1]{DR1}
J.~Douglass and G.~R\"ohrle, {\em{The geometry of generalized Steinberg varieties}}, Adv. Math. {\bf 187} (2004), no.~2, 396--416. 

 \bibitem[DR2]{DR2}
 J.~Douglass, G.~R\"ohrle, {\em{Homology of the Steinberg variety and Weyl group coinvariants}}, Doc. Math. {\bf 14} (2009), 339--357.

\bibitem[G]{G}
S.~Gunningham, {\em{A Derived Generalized Springer Decomposition for D-modules on a Reductive Lie Algebra,}} arXiv:1705.04297.

\bibitem[K]{K}
S.~Kato, {\em{A homological study of Green polynomials}},
Ann. Sci. \'Ec. Norm. Sup\'er. (4) {\bf 48} (2015), no.~5, 103--1074.

\bibitem[Kw]{Kw}
N.~Kwon, {\em{Borel--Moore homology and K-theory on the Steinberg variety}}, Michigan Math. J. {\bf 58} (2009), no.~3, 771--781.

\bibitem[Lu]{Lu}
V.~Lunts, {\em{Equivariant sheaves on toric varieties}}, Compositio Math. {\bf 96} (1995), no. 1, 63--83.

\bibitem[L0]{L0}
G.~Lusztig, {\em{Coxeter orbits and eigenspaces of Frobenius}},
Invent. Math. {\bf 38} (1976/77), no. 2, 101--159. 

\bibitem[L1]{L1}
G.~Lusztig, {\em {Intersection cohomology complexes on a reductive group}},
  Invent. Math. {\bf 75} (1984), no.~2, 205--272.

\bibitem[L2]{L2}
G.~Lusztig, {\em {Character Sheaves I}},
Adv. in Math. {\bf 56} (1985), no.~3, 193--237.

\bibitem[L3]{L3}
G.~Lusztig, {\em {Cuspidal local systems and graded Hecke algebras. I}}, Inst.
  Hautes \'Etudes Sci. Publ. Math. {\bf 75} (1988), no.~67, 145--202.
  
\bibitem[LY]{LY}
G.~Lusztig and Z.~Yun, {\em{Z/m-graded Lie algebras and perverse sheaves I}}, arXiv:1602.05244.

\bibitem[Mi]{Mi}
J.S.~Milne, {\em {Motives over finite fields}}, in {\em{Motives}} (Seattle, WA, 1991), Proc. Sympos. Pure Math., {\bf{55}}, Part 1, Amer. Math. Soc., Providence, RI, 1994.

\bibitem[O]{O} 
V.~Ostrik, {\em {A remark on cuspidal local systems}}, Adv. in Math. {\bf 192} (2005), 218--224.

\bibitem[R]{Ri}
L.~Rider, {\em {Formality for the nilpotent cone and a derived Springer correspondence}}, Adv. Math. 235 (2013), 208--236.

\bibitem[RR]{RR}
L.~Rider and A.~Russell, {\em {Perverse sheaves on the nilpotent cone and Lusztig's generalized Springer correspondence}}, in {\em{Lie algebras, Lie superalgebras, vertex algebras and related topics}}, 273--292, Proc. Sympos. Pure Math., {\bf{92}}, Amer. Math. Soc., Providence, RI, 2016.   
\bibitem[S]{S}
T.~Shoji, {\em {On the Green Polynomials of classical groups}}, Invent. Math. 74 (1983), no.~2, 239--267.  

\bibitem[So]{So}
W.~ Soergel, {\em{Langlands' philosophy and Koszul duality}}, Algebra--- representation theory (Constanta, 2000), NATO Sci. Ser. II Math. Phys. Chem., vol. 28, Kluwer Acad. Publ., Dordrecht, 2001, pp. 379--414. 

\bibitem[Sp]{Sp}
T.~Springer, {\em{Linear algebraic groups}}. Second edition. Progress in Mathematics, {\bf 9}. Birkh\"auser Boston, Inc., Boston, MA, 1998.

\end{thebibliography}


\end{document}